\newcounter{satznum}
\newtheorem{theorem}{Theorem}[satznum]
\newtheorem{lemma}[theorem]{Lemma}
\newtheorem{proposition}[theorem]{Proposition}
\newenvironment{remark}
 {\begin{trivlist}\item[]{\bf Remark.}}
 {\end{trivlist}}
\newenvironment{remarks}
 {\begin{trivlist}\item[]{\bf Remarks.}}
 {\end{trivlist}}
\newenvironment{example}
 {\begin{trivlist}\item[]{\bf Example.}}
 {\end{trivlist}}
\newenvironment{proof}
 {\begin{trivlist}\item[]{\bf Proof.}}
 {\end{trivlist}}
\gdef\me{{\mathbb E}} 
\gdef\nz{{\mathbb N}} 
\gdef\pr{{\mathbb P}} 
\gdef\rz{{\mathbb R}} 
\gdef\gz{{\mathbb Z}} 
\newcounter{todocounter}
\def\@MRExtract#1 #2!{#1}
\newcommand{\MR}[1]{
  \xdef\@MRSTRIP{\@MRExtract#1 !}
  \href{http://www.ams.org/mathscinet-getitem?mr=\@MRSTRIP}{MR\@MRSTRIP}}
\begin{document}
   \section*{ABSORPTION TIME AND TREE LENGTH OF THE KINGMAN COALESCENT AND
   THE GUMBEL DISTRIBUTION}
   {\sc M.~M\"ohle}\footnote{Mathematisches Institut, Eberhard Karls Universit\"at T\"ubingen,
   Auf der Morgenstelle 10, 72076 T\"ubingen, Germany, E-mail address: martin.moehle@uni-tuebingen.de}
   and
   {\sc H.~Pitters}\footnote{Department of Statistics, University of Oxford,
   1 South Parks Road, Oxford OX1 3TG, UK,
   E-mail address: helmut.pitters@stats.ox.ac.uk}
\begin{center}
   September 17, 2014
\end{center}
\begin{abstract}
   Formulas are provided for the cumulants and the moments of the
   time $T$ back to the most recent common ancestor of the Kingman
   coalescent. It is shown that both the $j$th cumulant and the $j$th
   moment of $T$ are linear combinations of the values $\zeta(2m)$,
   $m\in\{0,\ldots,\lfloor j/2\rfloor\}$, of the Riemann zeta function
   $\zeta$ with integer coefficients. The proof is based on a solution
   of a two-dimensional recursion with countably many initial values.
   A closely related strong convergence result for the tree length
   $L_n$ of the Kingman coalescent restricted to a sample of size $n$
   is derived. The results give reason to revisit the moments and
   central moments of the classical Gumbel distribution.

   \vspace{2mm}

   \noindent Keywords: absorption time; cumulants; Euler--Mascheroni integrals;
   Gumbel distribution; infinite convolution; 
   Kingman coalescent; moments; most recent common ancestor;
   tree length; zeta function

   \vspace{2mm}

   \noindent Running head: Absorption time and tree length of the
   Kingman coalescent

   \vspace{2mm}

   \noindent 2010 Mathematics Subject Classification:
            Primary 60C05;   
                    60J28    
            Secondary 60G50; 
            92D25            
\end{abstract}
\subsection{Introduction} \label{intro}
\setcounter{theorem}{0}
   Kingman's coalescent \cite{kingman1, kingman2}, the most important
   coalescent among the class of all exchangeable coalescents, is a
   continuous time Markov process $\Pi=(\Pi_t)_{t\ge 0}$ with state space
   ${\cal P}$, the set of partitions of $\nz:=\{1,2,\ldots\}$. If this
   process is in a state with $b$ blocks, then by definition during each
   transition any two blocks merge together at rate $1$. This process starts
   in the partition of $\nz$ into singletons and reaches its absorbing state,
   the partition consisting of the single block $\nz$, in finite time almost
   surely. Recently there is much interest in certain functionals of
   coalescent processes (restricted to a sample of size $n\in\nz$) such as
   the number of jumps, the absorption time, or the total tree length to
   mention a few of them. In this manuscript we provide some new results on
   the absorption time and the total tree length of the Kingman coalescent.
   These results give reason to revisit the classical Gumbel distribution.

   The article is organized as follows. In the following Section
   \ref{functionals} results on functionals of the Kingman
   coalescent, such as the absorption time and the tree length, are
   provided. Section \ref{gumbel} is devoted to the classical Gumbel
   distribution. We recall well known results of the Gumbel
   distribution but also shed some new light in particular on the
   central moments of this distribution. Proofs are provided in
   Section \ref{proofs}. The article finishes with an appendix where
   the first central moments of the Gumbel distribution are given
   explicitly. The appendix furthermore provides the spectral
   decomposition of the transition matrix of a pure death process
   having distinct death rates.
\subsection{Absorption time and tree length} \label{functionals}
\setcounter{theorem}{0}
   Kingman \cite{kingman1} studied the absorption time $T$ 
   of $\Pi$. In the biological
   context $T$ is called the time back to the most recent common ancestor or
   the age of the most recent common ancestor.
   It is well known that $T=\sum_{k=2}^\infty
   \tau_k$ is an infinite convolution of independent exponentially
   distributed random variables $\tau_k$ with parameter
   $\lambda_k:=k(k-1)/2$. Using the inversion formula for
   Fourier transforms it is readily checked
   that $T$ has a bounded and infinitely often differentiable density
   $g:\rz\to [0,\infty)$ with respect
   to Lebesgue measure $\lambda$ on $\rz$.
   Kingman \cite[p.~37, Eq.~(5.9)]{kingman1} showed that
   $g(t)=\sum_{k=2}^\infty (-1)^k(2k-1)\lambda_k e^{-\lambda_k t}$,
   $t\in (0,\infty)$. It is furthermore known
   (Watterson \cite[p.~213]{watterson}, Tavar\'e \cite[p.~132]{tavare1}
   that $T$ has mean $\me(T)=2$ and variance ${\rm Var}(T)=4\pi^2/3-12\approx 1.15947$
   and, hence, second moment $\me(T^2)=4\pi^2/3-8\approx 5.15947$.
   To the best of the authors knowledge 
   higher
   moments and cumulants of $T$ have not been derived so far.
   Proposition \ref{prop1} and Theorem \ref{thm2} below
   provide full information on the cumulants and moments of $T$.

\begin{proposition} \label{prop1}
   For the Kingman coalescent the absorption time $T$ has cumulants
   \begin{equation} \label{cumulant1}
      \kappa_j(T)
      \ =\ (j-1)!2^j\sum_{k=2}^\infty \frac{1}{k^j(k-1)^j},
      \qquad j\in\nz,
   \end{equation}
   and moments
   \begin{equation} \label{mean1}
   \me(T^j)\ =\ j!2^j\sum_{k=2}^\infty \frac{(-1)^k(2k-1)}{k^j(k-1)^j}
   \ =\ j!\sum_{m=1}^\infty \frac{1}{m^j}\bigg(\frac{4m-1}{(2m-1)^j}-\frac{4m+1}{(2m+1)^j}\bigg),
   \qquad j\in\nz.
   \end{equation}
   In particular, $\kappa_j(T)\sim (j-1)!$ and
   $\me(T^j)\sim 3j!$ as $j\to\infty$.
   Alternatively,
   \begin{equation} \label{meanalt}
      \me(T^j)\ =\
      j!\sum_{2\le k_1\le\cdots\le k_j}\frac{1}{\lambda_{k_1}\cdots\lambda_{k_j}}
      \ =\ j!2^j\sum_{2\le k_1\le\cdots\le k_j}\prod_{i=1}^j \frac{1}{k_i(k_i-1)},
      \qquad j\in\nz.
   \end{equation}
\end{proposition}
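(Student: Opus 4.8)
The plan is to exploit that $T=\sum_{k=2}^\infty\tau_k$ is a convergent sum of independent exponentials, so that its moment generating function factorizes as
\[
   M(s)\ :=\ \me(e^{sT})\ =\ \prod_{k=2}^\infty\frac{\lambda_k}{\lambda_k-s}\ =\ \prod_{k=2}^\infty\frac{1}{1-s/\lambda_k},
\]
which converges for $|s|<\lambda_2=1$ because $\sum_{k\ge2}1/\lambda_k=\sum_{k\ge2}2/(k(k-1))=2<\infty$; hence $M$ is analytic near the origin, every moment is finite, and $\me(T^j)=j!\,[s^j]M(s)$. For the cumulants (\ref{cumulant1}) I would use that cumulants are additive over independent summands, so $\kappa_j(T)=\sum_{k=2}^\infty\kappa_j(\tau_k)$. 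Since the cumulant generating function of an $\mathrm{Exp}(\lambda)$ variable is $-\log(1-s/\lambda)=\sum_{j\ge1}s^j/(j\lambda^j)$, one reads off $\kappa_j(\tau_k)=(j-1)!/\lambda_k^j$; substituting $\lambda_k=k(k-1)/2$ and summing (all terms are positive, so the interchange is automatic and the series converges for every $j\ge1$, telescoping when $j=1$ to recover $\me(T)=2$) yields (\ref{cumulant1}).

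For the representation (\ref{meanalt}) I would expand each factor of $M$ as a geometric series and collect the coefficient of $s^j$. For $s\in(0,1)$ all the terms involved are positive, so Tonelli legitimizes the rearrangement, and the coefficient of $s^j$ is exactly the complete homogeneous symmetric function of degree $j$ in the variables $1/\lambda_k$,
\[
   [s^j]M(s)\ =\ \sum_{n_2+n_3+\cdots=j}\ \prod_{k=2}^\infty\lambda_k^{-n_k}\ =\ \sum_{2\le k_1\le\cdots\le k_j}\frac{1}{\lambda_{k_1}\cdots\lambda_{k_j}}.
\]
Multiplying by $j!$ and inserting $1/\lambda_{k_i}=2/(k_i(k_i-1))$ gives both equalities of (\ref{meanalt}).

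The two forms in (\ref{mean1}) I would obtain from Kingman's density $g(t)=\sum_{k=2}^\infty(-1)^k(2k-1)\lambda_k e^{-\lambda_k t}$: integrating $t^jg(t)$ term by term and using $\int_0^\infty t^je^{-\lambda_k t}\,dt=j!/\lambda_k^{j+1}$ gives $\me(T^j)=j!\sum_{k\ge2}(-1)^k(2k-1)/\lambda_k^j$, which is the first form after substituting $\lambda_k$. Writing $k=2m$ and $k=2m+1$ and grouping, for each $m\ge1$, the consecutive pair (both terms share the factor $(2m)^{-j}=2^{-j}m^{-j}$) then produces the $m$-indexed second form. Finally the asymptotics follow by isolating the $k=2$ summand: $2^j\sum_{k\ge2}k^{-j}(k-1)^{-j}=1+(1/3)^j+\cdots\to1$ gives $\kappa_j(T)\sim(j-1)!$, while $2^j\sum_{k\ge2}(-1)^k(2k-1)k^{-j}(k-1)^{-j}=3-5\cdot3^{-j}+\cdots\to3$ gives $\me(T^j)\sim3\,j!$, the remainders in both cases being $O(3^{-j})$.

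The step I expect to be delicate is the term-by-term integration of $g$ in (\ref{mean1}) for small $j$. For $j\ge2$ the tail of $\sum_k(2k-1)/\lambda_k^j$ decays like $k^{-(2j-1)}$ and is absolutely summable, so Fubini applies immediately; but for $j=1$ the absolute series $\sum_k(2k-1)/\lambda_k$ diverges, its terms being $\sim2/k$, and the interchange relies only on the conditional convergence of the resulting alternating series. At that single point I would either invoke the known value $\me(T)=2$ or justify the exchange by inserting the convergence factor $e^{-st}$ and letting $s\downarrow0$ through Abel's theorem. Apart from this, the argument is largely a matter of recognizing the product and partial-fraction forms of one and the same generating function $M$.
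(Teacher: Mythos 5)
Your proposal is correct in substance, but it takes a genuinely different route from the paper. The paper never manipulates the infinite convolution directly for (\ref{mean1}): it computes the exact hypoexponential density $g_n(t)=\sum_{k=2}^n a_{nk}\lambda_k e^{-\lambda_k t}$ of the finite-sample absorption time $T_n=\sum_{k=2}^n\tau_k$, observes that the partial-fraction coefficients satisfy $a_{nk}=(-1)^k(2k-1)b_{nk}$ with $0\le b_{nk}\le 1$ and $b_{nk}\to 1$, and then passes to the limit in $\me(T_n^j)=j!\sum_{k=2}^n a_{nk}/\lambda_k^j$ via monotone convergence on one side and an $\varepsilon$-argument with a uniform tail bound on the other (valid for $j\ge 2$; the case $j=1$ is settled by $\me(T)=2$, just as you propose). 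The cumulant formula (\ref{cumulant1}) is then obtained by summing $\kappa_j(\tau_k)$ up to $n$ and invoking the moment convergence just established, and (\ref{meanalt}) by a multinomial expansion of $\me\big(\big(\sum_k\tau_k\big)^j\big)$. You instead work with $T$ itself: cumulants from the infinite-product form of the moment generating function (your Tonelli justification is sound, since $\sum_k 1/\lambda_k<\infty$ makes the double series absolutely convergent for $|s|<1$), (\ref{meanalt}) as the complete homogeneous symmetric function read off from the same product --- essentially the paper's multinomial expansion in generating-function form --- and (\ref{mean1}) by term-by-term integration of Kingman's density $g$. The trade-off: your argument is shorter and avoids the finite-$n$ limiting machinery, but it imports the hardest ingredient, the alternating-sign series for $g$, as external input (Kingman's formula), which is exactly the structure the paper's finite-$n$ partial fractions re-derive in the limit; conversely, the paper's route produces as a by-product the almost-sure and all-moments convergence $T_n\to T$ that is recorded in the remarks following the proposition, which your approach does not give.

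One detail needs repair: your fallback for $j=1$ via a convergence factor does not work as stated. Inserting $e^{-st}$ does not restore absolute convergence, since $\int_0^\infty te^{-st}(2k-1)\lambda_k e^{-\lambda_k t}\,{\rm d}t=(2k-1)\lambda_k/(s+\lambda_k)^2\sim 4/k$ for every fixed $s\ge 0$, so Fubini is still unavailable and Abel's theorem has no absolutely convergent object to act on. Stick with your primary fallback, which is also the paper's: take $\me(T)=2$ as known, and note (what both you and the paper leave implicit) that the right-hand side of (\ref{mean1}) for $j=1$ indeed equals $2$, since $\frac{2(2k-1)}{k(k-1)}=\frac{2}{k-1}+\frac{2}{k}$ and the two alternating harmonic series sum to $2\log 2$ and $2(1-\log 2)$, respectively.
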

\begin{remarks}
   1. For $n\in\nz$ let $T_n$ denote the absorption time of the Kingman
   coalescent restricted to a sample of size $n$. The proof of Proposition
   \ref{prop1} provided in Section \ref{proofs} shows that $T_n\to T$
   almost surely as $n\to\infty$ with convergence of all moments, which
   implies (see, for example, \cite[Proposition 3.12]{kallenberg}) the
   convergence $T_n\to T$ in $L^p$ for any $p\in (0,\infty)$. Moreover,
   the sequence $(T_n^p)_{n\in\nz}$ is uniformly
   integrable for any $p\in (0,\infty)$.

   2. Let $g_n:\rz\to [0,\infty)$ denote the density of $T_n$. Using the
   inversion formula for Fourier transforms it is straightforward to
   establish the local convergence result
   $\sup_{t\in\rz}|g_n(t)-g(t)|=O(1/n)$. Scheff\'e's
   theorem (see, for example, \cite[Theorem 16.12]{billingsley} implies
   that $g_n\to g$ in $L^1$, 
   and, therefore,
   $|\pr(T_n\in B)-\pr(T\in B)|=|\int_B g_n\,{\rm d}\lambda-\int_B g\,{\rm d}\lambda|
   \le\int |g_n-g|\,{\rm d}\lambda\to 0$ for all Borel sets $B\subseteq\rz$.

   3. With some more effort it can be even verified that
   $\sup_{t\in\rz}|n(g_n(t)-g(t))-2g'(t)|=O(1/n)$. The proof of this result
   is again based on the inversion formula for Fourier transforms, however
   a bit technical and therefore omitted here. We will not use this advanced
   local convergence result in our further considerations.
\end{remarks}
The following result shows that the cumulants $\kappa_j(T)$
and the moments $\me(T^j)$ of $T$ are related
to the Riemann zeta function $\zeta$. More precisely, $\kappa_j(T)$ and
$\me(T^j)$ are both linear combinations of the zeta values $\zeta(2m)$,
$m\in\{0,\ldots,\lfloor j/2\rfloor\}$, with integer coefficients.
\begin{theorem} \label{thm2}
   For all $j\in\nz$,
   \begin{equation} \label{cumulant2}
      \kappa_j(T)\ =\ (-1)^j2^{j+1}\sum_{m=0}^{\lfloor j/2\rfloor}
        \frac{(2j-2m-1)!}{(j-2m)!}\zeta(2m)
   \end{equation}
   and
   \begin{equation} \label{mean2}
      \me(T^j)\ =\
      (-1)^jj2^{j+1}\sum_{m=0}^{\lfloor j/2\rfloor}
      (2m-1)\bigg(1-\frac{1}{2^{2m-1}}\bigg)\frac{(2j-2m-2)!}{(j-2m)!}
      \,\zeta(2m),
   \end{equation}
   where $\zeta$ denotes the zeta function.
\end{theorem}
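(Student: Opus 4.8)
The plan is to start from the two series representations in Proposition \ref{prop1} and to evaluate the sums $\sum_{k\ge2}k^{-j}(k-1)^{-j}$ and $\sum_{k\ge2}(-1)^k(2k-1)k^{-j}(k-1)^{-j}$ in closed form by partial fraction decomposition. Shifting the index via $n=k-1$ turns each summand into a rational function with denominators $n^i$ and $(n+1)^i$, whose tail sums over $n$ produce the values $\zeta(i)$ for the cumulants and, because of the factor $(-1)^k$, the Dirichlet eta values $\eta(i)=(1-2^{1-i})\zeta(i)$ for the moments. The crucial structural observation is that both integrands are (anti)symmetric under the reflection $k\mapsto1-k$ that interchanges $k$ and $k-1$: this forces the partial-fraction coefficients of $n^{-i}$ and $(n+1)^{-i}$ to agree up to the sign $(-1)^i$, so that the contributions of all odd indices $i$ cancel and only the even zeta values $\zeta(2m)$ survive. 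The relation $\eta(2m)=(1-2^{1-2m})\zeta(2m)$ is precisely what generates the factor $1-2^{1-2m}=1-1/2^{2m-1}$ appearing in (\ref{mean2}).

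For the cumulants I would write
\[
   \frac{1}{n^j(n+1)^j}\ =\ \sum_{i=1}^j\Big(\frac{a_i}{n^i}+\frac{(-1)^ia_i}{(n+1)^i}\Big),
   \qquad a_i=(-1)^{j-i}\frac{(2j-i-1)!}{(j-i)!\,(j-1)!},
\]
reading off $a_i$ from the Taylor expansion of $(n+1)^{-j}$ at $n=0$ and deducing the symmetry $b_i=(-1)^ia_i$ from $k\mapsto1-k$. Summing over $n\ge1$, the index $i=1$ yields the telescoping series $a_1\sum_{n\ge1}(n^{-1}-(n+1)^{-1})=a_1$, every odd $i\ge3$ contributes $(a_i+b_i)\zeta(i)=0$ together with the constant $-b_i=a_i$, and every even $i=2m$ contributes $2a_{2m}\zeta(2m)$ together with the constant $-a_{2m}$. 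Since $2a_{2m}=(-1)^j2\,(2j-2m-1)!/((j-2m)!(j-1)!)$, multiplication by the prefactor $(j-1)!2^j$ matches the $\zeta(2m)$ coefficients of (\ref{cumulant2}) exactly. The accumulated constant is $\sum_{i=1}^j(-1)^{i-1}a_i=(-1)^{j-1}(2j-1)!/((j-1)!\,j!)$, which follows from the hockey-stick identity $\sum_{l=0}^{j-1}{j-1+l \choose j-1}={2j-1 \choose j}$; inserting $\zeta(0)=-\frac12$ shows it is exactly the $m=0$ term of (\ref{cumulant2}).

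For the moments the same scheme applies to $\varphi(k)=(2k-1)/(k^j(k-1)^j)$, whose coefficients over $k^{-i}$ are
\[
   c_i\ =\ (-1)^j\,(1-i)\,\frac{(2j-i-2)!}{(j-i)!\,(j-1)!},
\]
obtained by differentiating $(2k-1)(k-1)^{-j}$ at $k=0$ via the product rule; the reflection symmetry now reads $\varphi(1-k)=-\varphi(k)$ and gives the coefficient of $(k-1)^{-i}$ as $(-1)^{i+1}c_i$. A convenient feature is $c_1=0$, so $\eta(1)=\log2$ never appears and every remaining sum converges absolutely. As before the odd indices drop out, and for even $i=2m$ one picks up $-2c_{2m}\eta(2m)=2(-1)^j(2m-1)(1-2^{1-2m})(2j-2m-2)!/((j-2m)!(j-1)!)\,\zeta(2m)$, which after multiplication by $j!2^j$ reproduces the $m\ge1$ terms of (\ref{mean2}). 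The leftover constant $\sum_{i=1}^jc_i=(-1)^{j+1}(2j-2)!/((j-1)!\,j!)$, coming from a second hockey-stick computation for $\sum_{l=0}^{j-2}(l-(j-1))(j+l-2)!/l!$, supplies the $m=0$ term once $\zeta(0)=-\frac12$ is used.

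The routine parts are the two Taylor/Leibniz evaluations of the partial-fraction coefficients and the interchange of summation, justified by absolute convergence once the telescoping (cumulant) or the vanishing (moment) $i=1$ term is isolated. I expect the only genuinely delicate step to be the two constant terms: one must recognise the finite factorial sums as closed-form hockey-stick values and then check that, after substituting $\zeta(0)=-1/2$, they coincide with the $m=0$ summands predicted by (\ref{cumulant2}) and (\ref{mean2}). Once this bookkeeping is complete, the prefactors $(j-1)!\,2^j$ and $j!\,2^j$ from Proposition \ref{prop1} turn the coefficients into the exact expressions displayed in the theorem.
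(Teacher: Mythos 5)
Your proof is correct, but it takes a genuinely different route from the paper's. The paper never writes down the full partial-fraction expansion: it applies the single step $1/(k(k-1))=1/(k-1)-1/k$ one factor at a time, which turns the quantities $s_{ij}=\sum_{k\ge2}(-1)^k(2k-1)k^{-i}(k-1)^{-j}$ (and their non-alternating analogues) into solutions of the two-dimensional recursion $s_{ij}=s_{i-1,j}-s_{i,j-1}$; the heart of the paper's argument is Lemma \ref{reclemma}, which solves this recursion by induction on $i+j$ given the countably many boundary values $s_{0k},s_{k0}$ (computed separately in terms of zeta values and, for $k=2$, $\log 2$ terms that later cancel), after which the theorem follows by regrouping even and odd $k$. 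You bypass the lemma entirely: you expand $1/(n^j(n+1)^j)$ and $(2k-1)/(k^j(k-1)^j)$ in one shot, obtain the coefficients $a_i$, $c_i$ from Taylor expansions, halve the work with the reflections $n\mapsto -1-n$ and $k\mapsto 1-k$, and evaluate the resulting series via $\zeta$ and $\eta(s)=(1-2^{1-s})\zeta(s)$, with two hockey-stick identities packaging the leftover constants into the $m=0$ term through $\zeta(0)=-1/2$. The two methods are close cousins — the coefficients ${{i+j-k-1}\choose{i-1}}$ in the lemma's solution are exactly your partial-fraction coefficients — but yours trades the reusable recursion lemma (which the paper applies twice and also uses to evaluate other series) for explicit one-step computations, and in exchange you avoid computing the full families of boundary values $s_{0k},s_{k0}$. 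I verified your coefficient formulas, the symmetry claims, both hockey-stick evaluations, and the matching of the constants with the $m=0$ terms of (\ref{cumulant2}) and (\ref{mean2}); all are correct.

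One caveat you should repair: your formula $c_i=(-1)^j(1-i)(2j-i-2)!/((j-i)!\,(j-1)!)$, and with it the claim $c_1=0$, breaks down at $j=1$, where $(2j-i-2)!=(-1)!$ is undefined and in fact $c_1=1$ (indeed $(2k-1)/(k(k-1))=1/k+1/(k-1)$). This does not damage the conclusion: for odd $i$ the $\eta(i)$ contributions cancel by the antisymmetry $d_i=(-1)^{i+1}c_i$ whether or not $c_i$ vanishes, so the $\log 2$ terms still drop out, and the constant $(-1)^{j+1}(2j-2)!/((j-1)!\,j!)$ equals $1$ at $j=1$, giving $\me(T)=2$ as required; alternatively one can simply check $j=1$ directly, as the paper in effect does. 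But as written, the claim ``$c_1=0$, so $\eta(1)=\log 2$ never appears'' is false for $j=1$, so either restrict the coefficient computation to $j\ge 2$ and treat $j=1$ separately, or argue the odd-index cancellation without invoking $c_1=0$.
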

\begin{remarks}
   1. The coefficient in (\ref{mean2}) in front of $\zeta(2m)$ is integer,
   since $2^{j+1}(1-1/2^{2m-1})=2^{j+1}-2^{j-2m+2}\in\gz$ and
   $(2j-2m-2)!/(j-2m)!\in\gz$ for all $j\in\nz$ and all $m\in\{0,\ldots,
   \lfloor j/2\rfloor\}$.

   2. Let $B_0,B_1,\ldots$ denote the Bernoulli numbers defined
   recursively via $B_0:=1$ and $B_n:=-1/(n+1)\sum_{k=0}^{n-1}
   {{n+1}\choose k}B_k$ for $n\in\nz$. For instance, $B_1=-1/2$,
   and $B_2=1/6$.
   Since $\zeta(2m)=(-1)^{m-1}(2\pi)^{2m}/(2(2m)!)B_{2m}$ is a rational
   multiple of $\pi^{2m}$, Theorem
   \ref{thm2} implies that $\kappa_j(T)$ and $\me(T^j)$
   are both polynomials
   of degree $\lfloor j/2\rfloor$ with rational coefficients evaluated
   at $\pi^2$. The following tables are easily computed using (\ref{cumulant2})
   and (\ref{mean2}).
   \begin{center}
      \begin{tabular}{|r|l|l|r|}
         \hline
         $j$ & $\kappa_j(T)$ in terms of $\zeta(.)$ & $\kappa_j(T)$ in terms of $\pi$ & $\kappa_j(T)$ numerically \\
         \hline
         $1$ & $2$ & $2$ & $2.00000$\\
         $2$ & $8\zeta(2)-12$ & $\frac{4}{3}\pi^2-12$ & $1.15947$\\
         $3$ & $160-96\zeta(2)$ & $160-16\pi^2$ & $2.08633$\\
         $4$ & $192\zeta(4)+1920\zeta(2)-3360$ & $\frac{32}{15}\pi^4+320\pi^2-3360$ & $6.07947$\\
         $5$ & $-7680\zeta(4)-53760\zeta(2)+96768$ & $-\frac{256}{3}\pi^4-8960\pi^2+96768$ & $24.10210$\\
         \hline
      \end{tabular}\\
      \ \\
      Table 1: The first five cumulants of $T$.
   \end{center}

   \vspace{2mm}

   \begin{center}
      \begin{tabular}{|r|l|l|r|}
         \hline
         $j$ & $\me(T^j)$ in terms of $\zeta(.)$ & $\me(T^j)$ in terms of $\pi$ & $\me(T^j)$ numerically \\
         \hline
         $1$ & $2$ & $2$ & $2.00000$\\
         $2$ & $8\zeta(2)-8$ & $\frac{4}{3}\pi^2-8$ & $5.15947$\\
         $3$ & $96-48\zeta(2)$ & $96-8\pi^2$ & $17.04317$\\
         $4$ & $672\zeta(4)+768\zeta(2)-1920$ & $\frac{112}{15}\pi^4+128\pi^2-1920$ & $70.63058$\\
         $5$ & $-20160\zeta(4)-19200\zeta(2)+53760$ & $-224\pi^4-3200\pi^2+53760$ & $357.62953$\\
         \hline
      \end{tabular}\\
      \ \\
      Table 2: The first five moments of $T$.
   \end{center}
\end{remarks}
   The proofs provided in Section \ref{proofs} rely on
   the fact that the jump chain of the block counting process of
   the Kingman coalescent is deterministic, which implies that $T$ is an
   infinite convolution of exponentially distributed random variables.
   The proof of Theorem \ref{thm2} is based on a solution of a
   two-dimensional recursion with an infinite number of initial values
   (see Lemma \ref{reclemma}).

   Our methods do not seem to be directly applicable to
   (absorption times of) other exchangeable coalescent processes,
   since the jump chain of the block counting process of
   a coalescent with multiple collisions is in general not
   deterministic.

   Our methods are partly useful to analyze further functionals of
   the Kingman $n$-coalescent (restricted to a sample of size $n\in\nz$).
   As an example we provide detailed information on the tree length
   $L_n$ (the sum of the lengths of all branches of the
   $n$-coalescent tree) of the Kingman $n$-coalescent.
   Let $G$ be a standard Gumbel
   distributed random variable with distribution function $x\mapsto\exp(-\exp(-x))$, $x\in\rz$.
   It is well known (see, for example, \cite[p.~22--23]{tavare2}, \cite[Lemma 7.1]{drmotaiksanovmoehleroesler}
   or \cite[Lemma 2.21]{etheridge})
   that $L_n$ has the same distribution as the maximum of $n-1$ independent
   and exponentially distributed random variables with parameter $1/2$
   and that $L_n/2-\log n\to G$ in distribution as $n\to\infty$. We verify
   the following stronger convergence result.
\begin{theorem}[Strong asymptotics of the tree length] \label{thm3}
   For the Kingman coalescent, as $n\to\infty$, $G_n:=L_n/2-\log n\to G$
   almost surely and in $L^p$ for any $p\in (0,\infty)$, where $G$ is
   standard Gumbel distributed. Moreover, the sequence $(G_n^p)_{n\in\nz}$
   is uniformly integrable for any $p\in (0,\infty)$.
\end{theorem}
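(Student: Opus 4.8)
The plan is to realise all the tree lengths $L_n$ simultaneously on one probability space through the holding times of the coalescent, exactly as the absorption times $T_n$ are coupled in the proof of Proposition \ref{prop1}. When the block counting process of the $n$-coalescent has $k$ lineages it spends a time $\tau_k\sim{\rm Exp}(\lambda_k)$, $\lambda_k=k(k-1)/2$, in this state, during which $k$ branches are present, so that
\[
   L_n\ =\ \sum_{k=2}^n k\tau_k\ =\ \sum_{k=2}^n X_k,\qquad X_k:=k\tau_k,
\]
where the $\tau_k$, $k\ge 2$, are independent. Hence the $X_k$ are independent with $X_k\sim{\rm Exp}((k-1)/2)$, so $\me(X_k)=2/(k-1)$ and ${\rm Var}(X_k)=4/(k-1)^2$. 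In particular $\me(G_n)=\sum_{j=1}^{n-1}1/j-\log n\to\gamma$, the Euler--Mascheroni constant, which is the mean of the standard Gumbel law.

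First I would establish the almost sure convergence from this additive coupling (the distributional identity of $L_n$ with a maximum of i.i.d.\ exponentials is of no use here, since such a maximum fluctuates and does not converge after recentring). Because
\[
   \sum_{k=2}^\infty{\rm Var}(X_k)\ =\ 4\sum_{j=1}^\infty\frac{1}{j^2}\ =\ 4\zeta(2)\ <\ \infty,
\]
Kolmogorov's convergence theorem for sums of independent random variables shows that $S:=\sum_{k=2}^\infty(X_k-\me(X_k))$ converges almost surely. Writing
\[
   G_n\ =\ \frac{L_n}{2}-\log n\ =\ \frac12\sum_{k=2}^n\big(X_k-\me(X_k)\big)+\Big(\sum_{j=1}^{n-1}\tfrac1j-\log n\Big)
\]
and using $\sum_{j=1}^{n-1}1/j-\log n\to\gamma$, it follows that $G_n\to G:=\tfrac12 S+\gamma$ almost surely, with $G$ finite. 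Since almost sure convergence entails convergence in distribution, and since $G_n$ converges in distribution to the standard Gumbel law by the result recalled before the theorem, uniqueness of the distributional limit identifies $G$ as standard Gumbel.

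It remains to upgrade this to $L^p$ convergence, for which I would bound exponential moments uniformly in $n$. For $\theta<1$ all factors below are finite and
\[
   \me\big(e^{\theta G_n}\big)\ =\ n^{-\theta}\prod_{k=2}^n\me\big(e^{(\theta/2)X_k}\big)\ =\ n^{-\theta}\prod_{j=1}^{n-1}\frac{j}{j-\theta}\ =\ n^{-\theta}\,\Gamma(1-\theta)\,\frac{\Gamma(n)}{\Gamma(n-\theta)}.
\]
Since $\Gamma(n)/\Gamma(n-\theta)\sim n^\theta$, the right-hand side converges to $\Gamma(1-\theta)$ and is in particular bounded in $n$. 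Applying this once with some $\theta_0\in(0,1)$ and once with $-\theta_0$ controls both tails, giving $\sup_n\me(e^{\theta_0|G_n|})<\infty$. As $x^p=o(e^{\theta_0 x})$ when $x\to\infty$, this uniform exponential bound makes $(|G_n|^p)_{n\in\nz}$ uniformly integrable for every $p\in(0,\infty)$. Together with $G_n\to G$ almost surely, the Vitali convergence theorem then yields $G_n\to G$ in $L^p$ for every $p\in(0,\infty)$. As a byproduct the displayed identity gives $\me(e^{\theta G_n})\to\Gamma(1-\theta)$, the moment generating function of the standard Gumbel law, an independent confirmation of the identification of $G$.

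The conceptual obstacle is the one flagged above: the almost sure statement cannot be read off the distributional representation and must be driven by the holding-time coupling, where summability of the variances (a zeta value again) does the work. The technical heart is the uniform exponential moment bound; this is routine once the product is recognised as the Gamma ratio $\Gamma(1-\theta)\Gamma(n)/\Gamma(n-\theta)$, whose asymptotics cancel the deterministic recentring $n^{-\theta}$.
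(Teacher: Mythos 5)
Your proposal is correct, and although it shares the paper's starting point --- the representation $L_n\stackrel{d}{=}\sum_{k=2}^n X_k$ with $X_k:=k\tau_k$ independent and exponentially distributed with parameter $(k-1)/2$ --- it carries out both key steps with genuinely different tools. For the almost sure convergence, the paper centers $S_n:=(L_n-\me(L_n))/2$, imports the known distributional convergence $G_n\to G$ from the literature, and then invokes L\'evy's equivalence theorem (\cite[Theorem 22.7]{billingsley}: a series of independent random variables converges in distribution if and only if it converges almost surely); you instead use Kolmogorov's two-series criterion via $\sum_{k\ge 2}{\rm Var}(X_k)=4\zeta(2)<\infty$, so your almost sure convergence needs no prior knowledge of the limit law, the cited distributional result entering only to identify the limit as Gumbel --- and even that dependence is removable, since your identity $\me(e^{\theta G_n})=n^{-\theta}\,\Gamma(1-\theta)\,\Gamma(n)/\Gamma(n-\theta)\to\Gamma(1-\theta)$, valid for $\theta<1$, identifies the limit law directly through its moment generating function. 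For the $L^p$ convergence and uniform integrability, the paper first proves convergence of all cumulants of $G_n$ via (\ref{lncum}), hence of all moments, and then cites \cite[Proposition 3.12]{kallenberg}; you instead establish the uniform two-sided exponential bound $\sup_n\me(e^{\theta_0|G_n|})<\infty$ and finish with a de la Vall\'ee-Poussin argument and Vitali's theorem. Both routes are rigorous. The paper's route has the side benefit of producing the explicit cumulant and moment formulas it wants anyway (they reappear in the remarks and feed into Theorem \ref{cmgumbel}), while yours is more self-contained and quantitatively stronger (uniform exponential moments exceed what is strictly needed). Your parenthetical observation that the maximum representation of $L_n$ could not yield almost sure convergence is also apt and worth keeping in mind: in both your proof and the paper's, the almost sure statement refers to the coupling in which all $L_n$ are realized as partial sums built from a single sequence $(\tau_k)$ of independent exponential holding times.
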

\begin{remarks}
   1. Theorem \ref{thm3} implies the convergence $\kappa_j(G_n)\to\kappa_j(G)$
   of all cumulants and the convergence $\me(G_n^j)\to\me(G^j)$ of all
   moments, which is one of the starting points of the
   proof of Theorem \ref{thm3}. Formulas for the cumulants and the
   moments of $L_n$ are provided in (\ref{lncum}), (\ref{lnmom}) and (\ref{lnmom2}).

   2. For asymptotic results on the tree length $L_n$ for $\Lambda$-coalescents
   with $\Lambda=\beta(a,b)$ being the beta distribution with parameters
   $a,b\in (0,\infty)$ we refer the reader to
   \cite[Theorem 1]{kersting} for $0<a<1$ and $b:=2-a$ and to
   \cite[Corollary 4.3 and Theorem 5.2]{drmotaiksanovmoehleroesler} for $a=b=1$
   (Bolthausen--Sznitman coalescent). The asymptotics of
   $L_n$ for $\Xi$-coalescents with dust is provided in
   \cite[Theorem 3]{moehleproper}.
\end{remarks}
\subsection{The Gumbel distribution revisited} \label{gumbel}
\setcounter{theorem}{0}
Theorem \ref{thm3} and its proof smooth the way to establish a result
(Theorem \ref{cmgumbel}) on the Gumbel distribution, also called the
extreme value distribution of type $1$. Recall that a standard
Gumbel distributed random variable $G$ has
(see, for example, \cite[p.~12, Eqs.~(22.29) and (22.30)]{johnsonkotz})
cumulants $\kappa_1=\kappa_1(G)=\gamma$ (Euler's constant) and
$\kappa_j=\kappa_j(G)=(-1)^j\Psi^{(j-1)}(1)=(j-1)!\,\zeta(j)$, $j\ge 2$.
Note that $\kappa_j\sim (j-1)!$ as $j\to\infty$.
Moreover, $G$ has moments
\begin{equation} \label{gumbelmoments}
m_n\ :=\ \me(G^n)
\ =\ \int_0^\infty (-\log u)^n e^{-u}\,{\rm d}u
\ =\ (-1)^n \Gamma^{(n)}(1), \quad n\in\nz_0:=\{0,1,2,\ldots\}.
\end{equation}
The integral in (\ref{gumbelmoments}) is sometimes
called the $n$th Euler--Mascheroni integral. In the analytic
community its interpretation as the $n$th moment of the Gumbel
distribution often remains unmentioned. It is well known
that $m_n\sim n!$ as $n\to\infty$. The moments $m_1,m_2,\ldots$ can be
recursively computed via the relation between cumulants and moments
\begin{equation} \label{gumbelrec}
   m_n
   \ =\ \sum_{k=1}^n {{n-1}\choose{k-1}}\kappa_k m_{n-k}
   \ =\ \gamma m_{n-1}+(n-1)!\sum_{k=2}^n \frac{1}{(n-k)!}\zeta(k)m_{n-k},
   \quad n\in\nz,
\end{equation}
in agreement with the recursion provided on top of p.~214 in
the book of Boros and Moll \cite{borosmoll}.
A useful and well known formula for the moments is
\begin{equation} \label{mnexplicit}
m_n\ =\ \sum_{\pi\in{\cal P}_n}\prod_{B\in\pi}\kappa_{|B|},
\qquad n\in\nz,
\end{equation}
where the sum extends over all partitions $\pi$ of the set ${\cal
P}_n$ of all partitions of $\{1,\ldots,n\}$, and the product has
to be taken over all blocks $B$ of $\pi$. Since there exist
$n!/(a_1!\cdots a_n! 1!^{a_1}\cdots n!^{a_n})$ partitions
$\pi\in{\cal P}_n$ having $a_j$ blocks of size $j$, $1\le j\le n$,
the above formula can be also written as
   \begin{equation} \label{mnsum}
   m_n\ =\ n!\sum_{a_1,\ldots,a_n}
   \prod_{i=1}^n \frac{1}{a_i!}\bigg(\frac{\kappa_i}{i!}\bigg)^{a_i}
   \ =\ n!\sum_{a_1,\ldots,a_n}\frac{\gamma^{a_1}}{a_1!}
   \prod_{i=2}^n \frac{1}{a_i!}\bigg(\frac{\zeta(i)}{i}\bigg)^{a_i},
   \qquad n\in\nz,
   \end{equation}
where the sum $\sum_{a_1,\ldots,a_n}$ extends over all
$a_1,\ldots,a_n\in\nz_0$ satisfying $\sum_{i=1}^n ia_i=n$. For
instance,
$m_1=\gamma$, $m_2=\gamma^2+\zeta(2)$,
and $m_3=\gamma^3+3\gamma\zeta(2)+2\zeta(3)$.
In particular, $m_n$ is a polynomial of degree $n$ in the variable
$x:=(x_1,\ldots,x_n):=(\gamma,\zeta(2),\ldots,\zeta(n))$ with
nonnegative integer coefficients.
In the following we focus on the central moments
$m_n':=\me((G-\gamma)^n)$, $n\in\nz_0$, of the Gumbel distribution. Clearly,
\begin{equation} \label{gumbelmoments2}
   m_n\ =\ \me((G-\gamma+\gamma)^n)
   \ =\ \sum_{j=0}^n {n\choose j}\gamma^{n-j}m_j',\qquad n\in\nz_0,
\end{equation}
and
$$
m_n'\ =\ \sum_{j=0}^n {n\choose j}(-\gamma)^j m_{n-j}
\ =\ n!\sum_{j=0}^n \frac{(-\gamma)^j}{j!}\frac{m_{n-j}}{(n-j)!}
\ \sim\ n!\sum_{j=0}^\infty \frac{(-\gamma)^j}{j!}
\ =\ n!e^{-\gamma}
$$
as $n\to\infty$. As for the moments
we obtain for the central moments the recursion
$$
m_n'
\ =\ \sum_{k=1}^n {{n-1}\choose{k-1}}\kappa_k(G-\gamma)m_{n-k}'
\ =\ (n-1)!\sum_{k=2}^n \frac{1}{(n-k)!}\zeta(k)m_{n-k}',\qquad n\in\nz,
$$
with solution
\begin{equation}
m_n'\ =\ \sum_{\pi\in{\cal P}_n}\prod_{B\in\pi}\kappa_{|B|}(G-\gamma) \ =\
n!\sum_{a_2,\ldots,a_n}\prod_{i=2}^n\frac{1}{a_i!}\bigg(\frac{\zeta(i)}{i}\bigg)^{a_i},
\qquad n\in\nz,
\end{equation}
where the last sum $\sum_{a_2,\ldots,a_n}$ extends over all
$a_2,\ldots,a_n\in\nz_0$ satisfying $\sum_{i=2}^n ia_i=n$. In
particular, for all $n\ge 2$, $m_n'$ is a polynomial of degree
$\lfloor n/2\rfloor$ in the variable $(\zeta(2),\ldots,\zeta(n))$
with nonnegative integer coefficients. Theorem \ref{cmgumbel}
below provides an alternative formula for $m_n'$.
In order to state the result we
introduce the nonnegative integer coefficients
\begin{equation} \label{dn}
   d_n\ :=\ n!\sum_{j=0}^n \frac{(-1)^j}{j!},\qquad n\in\nz_0.
\end{equation}
Note that $d_n$ is the number of derangements
(fixed point free permutations) of $n$ elements. Clearly,
$d_n=nd_{n-1}+(-1)^n$, $n\in\nz$.
For instance, $d_0=1$, $d_1=0$, $d_2=1$, $d_3=2$, $d_4=9$, $d_5=44$, and
$d_6=265$.
A typical element of ${\cal P}_i$ will be denoted by
$\pi:=\{B_1,\ldots,B_l\}$, where $l:=|\pi|\in\{1,\ldots,i\}$ is the number
of blocks of the partition and the blocks $B_1,\ldots,B_l$
of the partition are non-empty disjoint subsets of $\{1,\ldots,i\}$
satisfying $B_1\cup\cdots\cup B_l=\{1,\ldots,i\}$. Note that the order of
the blocks is unimportant. For a set $B$ and indices $n_b$,
$b\in B$, we use in the following the notation $n_B:=\sum_{b\in B} n_b$.
\begin{theorem}[Alternative formula for the central moments] 
\label{cmgumbel}
   A standard Gumbel distributed random variable $G$ has central moments
   $m_0'=1$, $m_1'=0$ and
   \begin{equation} \label{central}
      m_n' 
      \ =\ n!\sum_{i=1}^n \frac{1}{i!}
           \sum_{{n_1,\ldots,n_i\ge 2}\atop{n_1+\cdots+n_i=n}}
           \frac{d_{n_1}\cdots d_{n_i}}{n_1!\cdots n_i!}\,s_i(n_1,\ldots,n_i),
           \qquad n\ge 2,
   \end{equation}
   with coefficients $d_n$ defined in (\ref{dn}) and
   $s_i(n_1,\ldots,n_i)$ given via
   \begin{eqnarray}
      s_i(n_1,\ldots,n_i)
      & := & \sum_{{k_1,\ldots,k_i\in\nz}\atop{\rm all\ distinct}}
             \frac{1}{k_1^{n_1}\cdots k_i^{n_i}} \label{s1} \\
      & = &  \sum_{l=1}^i (-1)^{i-l} \sum_{\{B_1,\ldots,B_l\}\in{\cal P}_i}
             (|B_1|-1)!\cdots (|B_l|-1)!\,
             \zeta(n_{B_1})\cdots\zeta(n_{B_l}) \label{s2}\\
      & = &
      \sum_{\pi\in{\cal P}_i}(-1)^{i-|\pi|}\prod_{B\in\pi}
      (|B|-1)!\,\zeta(n_B), \label{s3}
   \end{eqnarray}
   where $\zeta$ denotes the zeta function and $|\pi|$ the number of
   blocks of the partition $\pi$.
\end{theorem}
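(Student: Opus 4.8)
The plan is to compute the exponential generating function $M(t):=\sum_{n\ge 0}m_n't^n/n!=\me(e^{t(G-\gamma)})$ in closed form and read off its coefficients, and then to pass between the three expressions for $s_i$ by M\"obius inversion over the partition lattice. The cases $m_0'=1$ and $m_1'=0$ are immediate from $\me(G-\gamma)=0$, so assume $n\ge 2$. Since $G-\gamma$ has cumulants $\kappa_1=0$ and $\kappa_j=(j-1)!\,\zeta(j)$ for $j\ge 2$, its cumulant generating function is $\sum_{j\ge 2}\zeta(j)t^j/j$, convergent for $|t|<1$. Using $\zeta(j)=\sum_{k\ge 1}k^{-j}$, interchanging the (absolutely convergent) sums, and applying $\sum_{j\ge 2}x^j/j=-\log(1-x)-x$ gives $\log M(t)=\sum_{k\ge 1}\big(-\log(1-t/k)-t/k\big)$, whence
$$M(t)\ =\ \prod_{k=1}^\infty\frac{e^{-t/k}}{1-t/k}.$$
The decisive observation is that each factor is the generating function of the derangement numbers, $e^{-x}/(1-x)=\sum_{m\ge 0}d_mx^m/m!$ (which follows by the Cauchy product of $1/(1-x)$ and $e^{-x}$), so the $k$-th factor equals $\sum_{m\ge 0}(d_m/m!)(t/k)^m$.

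Next I would expand the product. Collecting the coefficient of $t^n$ amounts to summing $\prod_k(d_{m_k}/m_k!)\,k^{-m_k}$ over all families $(m_k)_{k\ge 1}$ of nonnegative integers with $\sum_k m_k=n$; this is legitimate because the product converges locally uniformly on $|t|<1$, so the Taylor coefficients of the partial products converge to those of $M$. Because $d_0=1$ and, crucially, $d_1=0$, every family having some $m_k=1$ contributes nothing, so only indices with $m_k=0$ or $m_k\ge 2$ survive. Writing $i$ for the number of indices with $m_k\ge 2$, listing the corresponding exponents as $n_1,\ldots,n_i\ge 2$ (with $n_1+\cdots+n_i=n$) and the indices themselves as distinct $k_1,\ldots,k_i$, and dividing by $i!$ to pass from ordered listings to the unordered support, I obtain
$$m_n'\ =\ n!\sum_{i=1}^n\frac{1}{i!}\sum_{{n_1,\ldots,n_i\ge 2}\atop{n_1+\cdots+n_i=n}}\frac{d_{n_1}\cdots d_{n_i}}{n_1!\cdots n_i!}\sum_{{k_1,\ldots,k_i\in\nz}\atop{\rm all\ distinct}}\frac{1}{k_1^{n_1}\cdots k_i^{n_i}},$$
the inner sum being exactly $s_i(n_1,\ldots,n_i)$ as in (\ref{s1}); this is (\ref{central}). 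The inner sums are finite, each dominated by $\prod_j\zeta(n_j)$, so the regrouping is valid.

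It then remains to identify (\ref{s1}) with (\ref{s3}); formula (\ref{s2}) is just (\ref{s3}) grouped according to the number $l=|\pi|$ of blocks. For $\pi\in{\cal P}_i$ let $g(\pi)$ be the sum of $\prod_j k_j^{-n_j}$ over all tuples $(k_1,\ldots,k_i)$ that are constant on every block of $\pi$; since the common value on a block $B$ contributes $\zeta(n_B)$, one has $g(\pi)=\prod_{B\in\pi}\zeta(n_B)$. If $f(\pi)$ denotes the analogous sum restricted to tuples whose exact equality pattern is $\pi$, then $g(\pi)=\sum_{\sigma\ge\pi}f(\sigma)$ and $f(\hat 0)=s_i(n_1,\ldots,n_i)$ for the partition $\hat 0$ into singletons. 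M\"obius inversion on the partition lattice yields $s_i=f(\hat 0)=\sum_{\sigma\in{\cal P}_i}\mu(\hat 0,\sigma)\prod_{B\in\sigma}\zeta(n_B)$, and substituting the classical value $\mu(\hat 0,\sigma)=\prod_{B\in\sigma}(-1)^{|B|-1}(|B|-1)!$ together with $\sum_{B\in\sigma}(|B|-1)=i-|\sigma|$ gives (\ref{s3}).

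I expect the main obstacle to be the bookkeeping in the second step: making the passage from the family $(m_k)$ to the ordered data $((n_1,k_1),\ldots,(n_i,k_i))$ rigorous, so that the symmetry factor $1/i!$ and the vanishing of the $m_k=1$ terms are correctly accounted for, and justifying the coefficient extraction from the infinite product. The derangement generating function and the M\"obius computation on ${\cal P}_i$ are then routine.
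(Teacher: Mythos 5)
Your proposal is correct, but it takes a genuinely different route from the paper's. The paper argues probabilistically: it uses the partial sums $S_N=\sum_{k=2}^N Y_k$ of independent centered exponentials from (\ref{sn}), whose convergence $S_N\to G-\gamma$ almost surely and in every $L^p$ (hence with convergence of all moments) was established in the proof of Theorem \ref{thm3}; it expands $\me(S_N^n)$ by the multinomial theorem, using that a centered exponential with parameter $\alpha$ has $n$th central moment $d_n/\alpha^n$, and lets $N\to\infty$ to obtain (\ref{central}) with $s_i$ given by (\ref{s1}); the identity (\ref{s1})$\,=\,$(\ref{s2}) is then proved by induction on $i$ via the recursion (\ref{srec}). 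You work instead on the generating-function side: your infinite product $\prod_{k\ge 1}e^{-t/k}/(1-t/k)$ is precisely the moment generating function of the paper's infinite sum of centered exponentials (equivalently, the Weierstrass product of $e^{-\gamma t}\Gamma(1-t)$), the derangement numbers enter through the factorwise identity $e^{-x}/(1-x)=\sum_{m\ge 0}d_mx^m/m!$ rather than through central moments of exponentials, and coefficient extraction (legitimate here since all terms are nonnegative, so the coefficients of the partial products increase to the full sum) replaces the paper's moment-convergence argument; the bookkeeping with the factor $1/i!$ and the vanishing of $m_k=1$ terms is exactly right. The second steps differ most: where the paper runs an induction, you identify (\ref{s3}) as M\"obius inversion over the partition lattice, the coefficient $(-1)^{i-|\pi|}\prod_{B\in\pi}(|B|-1)!$ being the classical value of $\mu(\hat 0,\pi)$. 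This is cleaner and, notably, it answers the question raised in the paper's remark following Theorem \ref{cmgumbel}, where the authors state they were unable to relate (\ref{s2})--(\ref{s3}) rigorously to a known sieve formula: your argument shows it \emph{is} partition-lattice M\"obius inversion applied to the equality-pattern decomposition of the unrestricted sum $\prod_j\zeta(n_j)$.

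One small point to tighten: passing from the cumulants $\kappa_j(G-\gamma)=(j-1)!\,\zeta(j)$, $j\ge 2$, to $\log M(t)=\sum_{j\ge 2}\zeta(j)t^j/j$ presupposes that the moment generating function of $G-\gamma$ exists near $0$ and equals the exponential of the (convergent) cumulant series; cumulants alone do not guarantee this. It is true here and easily anchored, e.g.\ by quoting $\me(e^{tG})=\Gamma(1-t)$ for $t<1$, which is implicit in the paper's formula (\ref{gumbelmoments}), or by bounding the absolute moments of $G-\gamma$ to show the moment series has radius of convergence $1$.
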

\begin{remarks}
   1. The proof of Theorem \ref{cmgumbel} is based on the fact that
   the random variable $S_n$ defined in (\ref{sn}) converges to $G-\gamma$
   as $n\to\infty$. For more information on this convergence we refer the
   reader to the proof of Theorem \ref{thm3}.

   2. Clearly, $s_i(n_1,\ldots,n_i)$ is symmetric
   with respect to the entries $n_1,\ldots,n_i$. For instance,
   $s_1(n_1)=\zeta(n_1)$, $s_2(n_1,n_2)=\zeta(n_1)\zeta(n_2)-\zeta(n_1+n_2)$
   and $s_3(n_1,n_2,n_3)=\zeta(n_1)\zeta(n_2)\zeta(n_3)-\zeta(n_1)\zeta(n_2+n_3)
   -\zeta(n_2)\zeta(n_1+n_3)-\zeta(n_3)\zeta(n_1+n_2)+2\zeta(n_1+n_2+n_3)$.
   A recursion for $s_i(n_1,\ldots,n_i)$ is provided in (\ref{srec}).
   Eqs.~(\ref{s2}) and (\ref{s3}) are reminiscent of sieve formulas, but we
   have not been able to rigorously relate these equations with some known
   sieve formula.

   3. The values of the central moments $m_n'$ for $0\le n\le 10$ are
   listed in the appendix. The $n$th raw moment $m_n$ of the Gumbel
   distribution is either obtained via (\ref{mnexplicit}) or (\ref{mnsum}),
   or from the central moments $m_j'$, $0\le j\le n$, via
   (\ref{gumbelmoments2}). The book of Srivastava and Choi
   \cite[pp.~370--371]{srivastavachoi2}
   contains the values of $(-1)^nm_n=\Gamma^{(n)}(1)$ for $1\le n\le 10$.
\end{remarks}
\subsection{Proofs} \label{proofs}
\setcounter{theorem}{0}
\begin{proof} (of Proposition \ref{prop1})
   For $n\in\nz$ let $T_n$ denote the absorption time of the Kingman
   coalescent restricted to a sample of size $n$.
   Clearly (see, for example, Kingman \cite[Eq. (5.5)]{kingman1}),
   $T_n\stackrel{d}{=}\sum_{k=2}^n \tau_k$, where $\tau_2,\tau_3,\ldots$ are
   independent random variables and $\tau_k$ is exponentially
   distributed with parameter $\lambda_k=k(k-1)/2$, $k\ge 2$. Thus
   (see, for example, Ross \cite[p.~309]{ross}),
   $T_n$ has a hypoexponential distribution with density $g_n(t):=\sum_{k=2}^n a_{nk}\lambda_k
   e^{-\lambda_kt}$, $t>0$, where
   $$
   a_{nk}\ :=\ \prod_{{j=2}\atop{j\ne k}}^n \frac{\lambda_j}{\lambda_j-\lambda_k}
   \ =\ (-1)^k(2k-1)\frac{n!(n-1)!}{(n-k)!(n+k-1)!},\qquad 2\le k\le n.
   $$
   Alternatively, the density $g_n$ of $T_n$ is obtained as follows.
   Let $D=(D_t)_{t\ge 0}$ denote the block counting process of the
   Kingman coalescent restricted to a sample of size $n$. From
   Lemma \ref{decomposition} (spectral decomposition) provided in the appendix
   it follows that $T_n$ has distribution
   function
   \begin{eqnarray*}
      \pr(T_n\le t)
      & = & \pr(D_t=1)
      \ = \ \sum_{k=1}^n e^{-\lambda_kt} r_{nk}l_{k1}
      \ = \ \sum_{k=1}^n e^{-\lambda_kt}
            \bigg(\prod_{j=k+1}^n\frac{\lambda_j}{\lambda_j-\lambda_k}\bigg)
            \bigg(\prod_{j=1}^{k-1}\frac{\lambda_{j+1}}{\lambda_j-\lambda_k}\bigg)\\
      & = & 1 - \sum_{k=2}^n e^{-\lambda_kt}\prod_{{j=2}\atop{j\ne k}}^n
            \frac{\lambda_j}{\lambda_j-\lambda_k}
      \ = \ 1 - \sum_{k=2}^n a_{nk}e^{-\lambda_kt},
   \qquad t\in [0,\infty),
   \end{eqnarray*}
   where the second last equality holds since $\lambda_1=0$. Taking the derivative with respect to $t$ it follows
   that $T_n$ has density $g_n$. In particular, $T_n$ has moments
   \begin{equation} \label{meantn}
      \me(T_n^j)\ =\ \int_0^\infty t^j g_n(t)\,{\rm d}t
      \ =\ \sum_{k=2}^n a_{nk}\int_0^\infty t^j\lambda_k e^{-\lambda_k t}\,{\rm d}t
      \ =\ j!\sum_{k=2}^n \frac{a_{nk}}{\lambda_k^j},
      \qquad j\in\nz.
   \end{equation}
   In the following it is shown, essentially by letting $n\to\infty$ in
   (\ref{meantn}), that
   \begin{equation} \label{meant}
      \me(T^j)\ =\ j!\sum_{k=2}^\infty \frac{(-1)^k(2k-1)}{\lambda_k^j},
      \qquad j\in\nz.
   \end{equation}
   Eq.~(\ref{meant}) holds for $j=1$, since $\me(T)=2$. Assume now
   that $j\ge 2$.
   From $T_n\nearrow T\stackrel{d}{=}\sum_{k=2}^\infty\tau_k$ almost surely
   as $n\to\infty$ it follows by
   monotone convergence that the left hand side in (\ref{meantn}) converges
   to the left hand side in (\ref{meant}) as $n\to\infty$. In order to see
   that the right hand side in (\ref{meantn}) converges to the right hand side
   in (\ref{meant}) as $n\to\infty$
   fix $\varepsilon>0$. Since $j\ge 2$ and $\lambda_k=k(k-1)/2$,
   the series $\sum_{k=2}^\infty (2k-1)/\lambda_k^j$ is absolutely convergent.
   Thus, there exists a constant $n_0=n_0(\varepsilon)$ such that
   $\sum_{k=n_0+1}^\infty (2k-1)/\lambda_k^j<\varepsilon$. Noting that
   $a_{nk}=(-1)^k(2k-1)b_{nk}$ with
   $$
   0\ \le\ b_{nk}\ :=\ \frac{n!(n-1)!}{(n-k)!(n+k-1)!}
   \ =\ \frac{n(n-1)\cdots(n-k+1)}{(n+k-1)(n+k-2)\cdots n}\ \le\ 1
   $$
   it follows for all $n>n_0$ that
   \begin{eqnarray*}
      \bigg|\sum_{k=2}^n \frac{a_{nk}}{\lambda_k^j}-\sum_{k=2}^\infty \frac{(-1)^k(2k-1)}{\lambda_k^j}\bigg|
      & \le & \sum_{k=2}^n \frac{2k-1}{\lambda_k^j}\underbrace{|b_{nk}-1|}_{\le 1} +
         \sum_{k=n+1}^\infty \frac{2k-1}{\lambda_k^j}\\
      & \le & \sum_{k=2}^{n_0} \frac{2k-1}{\lambda_k^j}|b_{nk}-1| + \sum_{k=n_0+1}^\infty \frac{2k-1}{\lambda_k^j}\\
      & \le & \sum_{k=2}^{n_0}\frac{2k-1}{\lambda_k^j}|b_{nk}-1| + \varepsilon
      \ \to\ \varepsilon
   \end{eqnarray*}
   as $n\to\infty$, since $b_{nk}\to 1$ as $n\to\infty$ for each fixed
   $k\in\{2,3,\ldots\}$. Since $\varepsilon>0$ can be chosen arbitrarily
   small, it follows that the right hand side in (\ref{meantn}) converges
   to the right hand side in (\ref{meant}). Thus, (\ref{meant}) is established.
   Distinguishing in (\ref{meant}) even $k=2m$ and odd $k=2m+1$ and summing over
   all $m\in\nz$ it follows that
   $$
   \me(T^j) 
   \ =\ j!\sum_{m=1}^\infty \frac{1}{m^j}\bigg(\frac{4m-1}{(2m-1)^j}-\frac{4m+1}{(2m+1)^j}\bigg),
   \quad j\in\nz,
   $$
   and (\ref{mean1}) is established.
   Let us now turn to the cumulants of $T$.
   Note that the exponential distribution with parameter
   $\lambda\in (0,\infty)$ has $j$th cumulant $(j-1)!/\lambda^j$.
   From $T_n\stackrel{d}{=}\sum_{k=2}^n\tau_k$
   it follows that $T_n$ has cumulants
   \begin{equation} \label{cum}
   \kappa_j(T_n)\ =\ \sum_{k=2}^n \kappa_j(\tau_k)
   \ =\ \sum_{k=2}^n \frac{(j-1)!}{\lambda_k^j}
   \ =\ (j-1)!2^j\sum_{k=2}^n \frac{1}{k^j(k-1)^j},
   \qquad j\in\nz.
   \end{equation}
   We have verified above that the moments of $T_n$ converge to those of
   $T$, which implies that the cumulants of $T_n$ converge to those of $T$.
   Letting $n\to\infty$ in (\ref{cum}) yields (\ref{cumulant1}).

   Note that $\kappa_j(T)/(j-1)!=\sum_{k=2}^\infty 1/\lambda_k^j\to 1$
   and that $\me(T^j)/j!=\sum_{k=2}^n (-1)^k(2k-1)/\lambda_k^j
   \to 3$ as $j\to\infty$, since $\lambda_2=1$ and $\lambda_k\ge \lambda_3=3$
   for all $k\ge 3$.

   It remains to verify the alternative formula (\ref{meanalt}) for the
   moments of $T$. For all $j\in\nz$ we have
   \begin{eqnarray*}
      \me(T^j)
      & = & \me\bigg(\bigg(\sum_{k=2}^\infty\tau_k\bigg)^j\bigg)
      \ = \ \sum_{k_1,\ldots,k_j=2}^\infty \me(\tau_{k_1}\cdots\tau_{k_j})\\
      & = & \sum_{k_1,\ldots,k_j=2}^\infty \prod_{m=2}^\infty \me(\tau_m^{a_m})
      \ = \ \sum_{2\le k_1\le\cdots\le k_j}\frac{j!}{\prod_{m=2}^\infty a_m!}\prod_{m=2}^\infty\me(\tau_m^{a_m}),
   \end{eqnarray*}
   where, for $m\ge 2$, $a_m$ denotes the number of indices $k_1,\ldots,k_j$
   being equal to $m$. Note that $\sum_{m=2}^\infty a_m=j$. Since
   $\me(\tau_m^{a_m})=a_m!/\lambda_m^{a_m}$, the above
   expression simplifies to
   $$
   \me(T^j)\ =\ j!\sum_{2\le k_1\le\cdots\le k_j}
   \frac{1}{\lambda_{k_1}\cdots\lambda_{k_j}}
   \ =\ j!2^j\sum_{2\le k_1\le\cdots\le k_j}\prod_{i=1}^j\frac{1}{k_i(k_i-1)},
   $$
   which is (\ref{meanalt}).
   \hfill$\Box$
\end{proof}
The proof of Theorem \ref{thm2} is based on the following basic but
fundamental lemma, which provides a solution for a certain two dimensional
recursion with a countable number of initial values.
\begin{lemma} \label{reclemma}
   Let $a_1,b_1,a_2,b_2,\ldots\in\rz$. For $i,j\in\nz$ define $s_{ij}$
   recursively via $s_{ij}:=s_{i-1,j}-s_{i,j-1}$, with initial values
   $s_{0k}:=a_k$ and $s_{k0}:=b_k$, $k\in\nz$. Then
   \begin{equation} \label{solution}
      s_{ij}\ =\ \sum_{k=1}^j (-1)^{j-k}{{i+j-k-1}\choose{i-1}}a_k
      + (-1)^j \sum_{k=1}^i {{i+j-k-1}\choose{j-1}}b_k,
      \qquad i,j\in\nz.
   \end{equation}
\end{lemma}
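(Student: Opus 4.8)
The plan is to prove (\ref{solution}) by induction on $n := i+j \ge 2$. Write $F(i,j)$ for the right-hand side of (\ref{solution}); the goal is $s_{ij}=F(i,j)$ for all $i,j\in\nz$. The base case $n=2$ is $i=j=1$, where the recursion gives $s_{11}=s_{01}-s_{10}=a_1-b_1$, while both binomial coefficients in $F(1,1)$ equal ${0\choose 0}=1$, so $F(1,1)=a_1-b_1$ as well. Note that $s_{00}$ is never invoked, since for $i,j\ge 1$ the recursion only calls on $s_{i-1,j}$ and $s_{i,j-1}$, whose off-axis values reduce to $a_j$ (when $i=1$) and $b_i$ (when $j=1$).

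The heart of the induction is the purely algebraic identity $F(i,j)=F(i-1,j)-F(i,j-1)$ for $i,j\ge 2$, which mirrors the recursion. I would verify it termwise. In the $a$-part, after using $-(-1)^{j-1-k}=(-1)^{j-k}$ to align signs, the coefficients combine through Pascal's rule ${{i+j-k-2}\choose{i-2}}+{{i+j-k-2}\choose{i-1}}={{i+j-k-1}\choose{i-1}}$ for $1\le k\le j-1$, while the extreme index $k=j$ is handled by ${i-2\choose i-2}={i-1\choose i-1}=1$. Symmetrically, the $b$-part combines through ${{i+j-k-2}\choose{j-1}}+{{i+j-k-2}\choose{j-2}}={{i+j-k-1}\choose{j-1}}$ for $1\le k\le i-1$, with $k=i$ handled by ${j-2\choose j-2}={j-1\choose j-1}=1$. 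Given this identity, the induction hypothesis yields $s_{ij}=s_{i-1,j}-s_{i,j-1}=F(i-1,j)-F(i,j-1)=F(i,j)$.

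It then remains to treat the two boundary regimes where a predecessor lies on an axis and equals an initial value. If $i=1$ and $j\ge 2$, then $s_{1,j}=a_j-s_{1,j-1}$, and since $F(1,j)=\sum_{k=1}^{j}(-1)^{j-k}a_k+(-1)^j b_1$ one checks directly that $F(1,j)=a_j-F(1,j-1)$. If $i\ge 2$ and $j=1$, then $s_{i,1}=s_{i-1,1}-b_i$, and since $F(i,1)=a_1-\sum_{k=1}^{i}b_k$ one has $F(i,1)=F(i-1,1)-b_i$. These two short computations, combined with the main identity, close the induction.

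I expect the only real obstacle to be the bookkeeping in the inductive step: correctly matching the shifted summation ranges and isolating the extreme indices $k=j$ and $k=i$ before applying Pascal's rule. This is routine but error-prone. As a conceptual check and an alternative derivation, one can read (\ref{solution}) combinatorially. Unrolling $s_{ij}=s_{i-1,j}-s_{i,j-1}$ expresses $s_{ij}$ as a signed sum over monotone lattice paths from $(i,j)$ down to the axes, each leftward step carrying weight $+1$ and each downward step weight $-1$, the path terminating the first time a coordinate hits $0$. The paths first reaching the vertical axis at $(0,k)$ number ${{i+j-k-1}\choose{i-1}}$ and carry sign $(-1)^{j-k}$, contributing $a_k$, whereas those first reaching the horizontal axis at $(k,0)$ number ${{i+j-k-1}\choose{j-1}}$ and carry sign $(-1)^{j}$, contributing $b_k$; summing reproduces exactly the two sums in (\ref{solution}).
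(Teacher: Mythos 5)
Your proof is correct and is essentially the paper's own argument: induction on $i+j$, with the cases $i=1$ and $j=1$ verified directly and the interior case $i,j\ge 2$ handled by isolating the extreme terms $k=j$, $k=i$ and applying Pascal's rule, exactly as in the paper's Case 1/Case 2/Case 3 structure. Your closing lattice-path interpretation is a nice additional sanity check but plays no role in the inductive proof itself.
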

\begin{proof} (of Lemma \ref{reclemma})
   Induction on $i+j$. Clearly,
   (\ref{solution}) holds for $i+j=2$, 
   since $s_{11}=s_{01}-s_{10}=a_1-b_1$. For the induction step from $i+j-1$
   to $i+j$ ($>2$) three cases are distinguished.

   \vspace{2mm}

   Case 1: If $j=1$, then $i>1$ and, hence, by the recursion and by
   induction,
   $
   s_{i1}=s_{i-1,1}-s_{i0}
   =(a_1-\sum_{k=1}^{i-1}b_k)-b_i
   =a_1-\sum_{k=1}^i b_k$,
   which is (\ref{solution}) for $j=1$.

   \vspace{2mm}

   Case 2: If $i=1$, then $j>1$ and, hence, by the recursion and by
   induction,
   $
   s_{1j}=s_{0j}-s_{1,j-1}
   =a_j-(\sum_{k=1}^{j-1}(-1)^{j-1-k}a_k + (-1)^{j-1}b_1)
   =\sum_{k=1}^j (-1)^{j-k}a_k + (-1)^jb_1$,
   which is (\ref{solution}) for $i=1$.

   \vspace{2mm}

   Case 3: If $i,j>1$ then
   \begin{eqnarray*}
      s_{ij}
      & = & s_{i-1,j} - s_{i,j-1}\\
      & = & \sum_{k=1}^j (-1)^{j-k}{{i+j-k-2}\choose{i-2}}a_k
            + (-1)^j \sum_{k=1}^{i-1}{{i+j-k-2}\choose{j-1}}b_k\\
      &   & \hspace{1cm} - \sum_{k=1}^{j-1} (-1)^{j-1-k}{{i+j-k-2}\choose{i-1}}a_k
            - (-1)^{j-1}\sum_{k=1}^i {{i+j-k-2}\choose{j-2}}b_k\\
      & = & a_j+\sum_{k=1}^{j-1}(-1)^{j-k}\bigg({{i+j-k-2}\choose{i-2}}+{{i+j-k-2}\choose{i-1}}\bigg) a_k\\
      &   & \hspace{1cm} + (-1)^jb_i + (-1)^j\sum_{k=1}^{i-1} \bigg({{i+j-k-2}\choose{j-2}}+{{i+j-k-2}\choose{j-1}}\bigg)b_k\\
      & = & \sum_{k=1}^j (-1)^{j-k}{{i+j-k-1}\choose{i-1}}a_k
            + (-1)^j\sum_{k=1}^i {{i+j-k-1}\choose{j-1}}b_k,
   \end{eqnarray*}
   which completes the induction.\hfill$\Box$
\end{proof}
Before we come to the proof of Theorem \ref{thm2} we provide
a typical application of Lemma \ref{reclemma} showing that this lemma
can be used to determine the value of certain series.
\begin{example}
   For $j\in\nz$ we would like to determine the series
   $\sum_{k=2}^\infty 1/(k^j(k-1)^j)$. We proceed as follows.
   For $i,j\in\nz_0:=\{0,1,2,\ldots\}$ with $i+j\ge 2$ define
   $s_{ij}:=\sum_{k=2}^\infty 1/(k^i(k-1)^j)$. Note that $s_{11}=1$,
   $s_{i0}=\zeta(i)-1$, $i\ge 2$, and that $s_{0j}=\zeta(j)$, $j\ge 2$.
   For all $i,j\in\nz$ with $i+j\ge 3$,
   $$
   s_{ij}\ =\ \sum_{k=2}^\infty \frac{1}{k^{i-1}(k-1)^{j-1}}\frac{1}{k(k-1)}
   \ =\ \sum_{k=2}^\infty \frac{1}{k^{i-1}(k-1)^{j-1}}
   \bigg(\frac{1}{k-1}-\frac{1}{k}\bigg)
   \ =\ s_{i-1,j} - s_{i,j-1}.
   $$
   If we additionally define $s_{01}:=1$ and
   $s_{10}:=0$, then this recursion holds also for $i=j=1$, so for all
   $i,j\in\nz$. By Lemma \ref{reclemma}, for all $j\in\nz$,
   \begin{eqnarray*}
      s_{jj}
      & = & \sum_{k=1}^j (-1)^{j-k}{{2j-k-1}\choose{j-1}}s_{0k} + (-1)^j
            \sum_{k=1}^j {{2j-k-1}\choose{j-1}}s_{k0}\\
      & = & (-1)^j\sum_{{k=2}\atop{k\rm\ even}}^j {{2j-k-1}\choose{j-1}}(s_{0k}+s_{k0})
            + (-1)^j\sum_{{k=1}\atop{k\rm\ odd}}^j {{2j-k-1}\choose{j-1}}(s_{k0}-s_{0k}).
   \end{eqnarray*}
   Plugging in $s_{0k}+s_{k0}=2\zeta(k)-1$ for even $k$ and $s_{k0}-s_{0k}=-1$
   for odd $k$ we obtain the solution
   \begin{eqnarray}
      \sum_{k=2}^\infty \frac{1}{k^j(k-1)^j}
      & = & s_{jj}
      \ = \ (-1)^{j+1}\sum_{k=1}^j {{2j-k-1}\choose{j-1}} +
            2(-1)^j\sum_{{k=2}\atop{k\rm\ even}}^j {{2j-k-1}\choose{j-1}}\zeta(k)\nonumber\\
      & = & (-1)^{j+1}{{2j-1}\choose j} + 2(-1)^j
            \sum_{m=1}^{\lfloor j/2\rfloor} {{2j-2m-1}\choose{j-1}}\zeta(2m)\nonumber\\
      & = & 2(-1)^j\sum_{m=0}^{\lfloor j/2\rfloor}{{2j-2m-1}\choose{j-1}}\zeta(2m),
            \qquad j\in\nz, \label{example}
   \end{eqnarray}
   since $\zeta(0)=-1/2$.
   For instance $s_{22}=2\zeta(2)-3=\pi^2/3-3\approx 0.28987$ and
   $s_{33}=10-6\zeta(2)=10-\pi^2\approx 0.13040$.
\end{example}
The following proof of Theorem \ref{thm2} has much in common with
the previous example, but a modified double sequence
$(s_{ij})_{i,j\in\nz}$ is used having in particular more involved
initial values.
\begin{proof} (of Theorem \ref{thm2})
   The formula (\ref{cumulant2}) for the cumulants of $T$ follows
   directly from (\ref{cumulant1}) by multiplying (\ref{example})
   with $(j-1)!2^j$. In order to verify the formula (\ref{mean2}) for the
   moments of $T$ define for $i,j\in\nz_0$ with $i+j\ge 2$
   $$
   s_{ij}
   \ :=\ \sum_{k=2}^\infty \frac{(-1)^k(2k-1)}{k^i(k-1)^j}.
   $$
   By Proposition \ref{prop1}, $\me(T^j)=j!2^js_{jj}$. Thus it remains
   to verify that
   \begin{equation} \label{sjjsolution}
      s_{jj}\ =\ 2(-1)^j\sum_{m=0}^{\lfloor j/2\rfloor}(2m-1)
      \bigg(1-\frac{1}{2^{2m-1}}\bigg)\frac{(2j-2m-2)!}{(j-1)!(j-2m)!}\zeta(2m),
      \qquad j\in\nz.
   \end{equation}
   It is straightforward to check that
   $s_{11}=1$, $s_{02}=2\log 2+\zeta(2)/2$, 
   $s_{20}=1-2\log 2+\zeta(2)/2$, 
   $$
   s_{i0}\ =\ 1 - 2\bigg(1-\frac{1}{2^{i-2}}\bigg)\zeta(i-1) + \bigg(1-\frac{1}{2^{i-1}}\bigg) \zeta(i),
   \qquad i\ge 3,
   $$
   and
   $$
   s_{0j}\ =\ 2\bigg(1-\frac{1}{2^{j-2}}\bigg)\zeta(j-1) +
   \bigg(1-\frac{1}{2^{j-1}}\bigg)\zeta(j),\qquad j\ge 3.
   $$
   For all $i,j\in\nz$ with $i+j\ge 3$ we have
   \begin{equation} \label{rec}
      s_{ij}
      \ = \ \sum_{k=2}^\infty \frac{(-1)^k(2k-1)}{k^{i-1}(k-1)^{j-1}}
            \frac{1}{k(k-1)}
      \ = \ \sum_{k=2}^\infty \frac{(-1)^k(2k-1)}{k^{i-1}(k-1)^{j-1}}
            \bigg(\frac{1}{k-1}-\frac{1}{k}\bigg)
      \ = \ s_{i-1,j} - s_{i,j-1}.
   \end{equation}
   If we additionally define $s_{01}:=1$
   and $s_{10}:=0$, then the recursion (\ref{rec}) holds also for $i=j=1$, so
   for all $i,j\in\nz$. By Lemma \ref{reclemma},
   $$
   s_{jj}
   \ =\ \sum_{k=1}^j (-1)^{j-k}{{2j-k-1}\choose{j-1}} s_{0k}
        + (-1)^j\sum_{k=1}^j {{2j-k-1}\choose{j-1}}s_{k0},
   \qquad j\in\nz.
   $$
   Ordering with respect to even and odd $k$ yields
   $$
   (-1)^js_{jj}
   \ =\ \sum_{{k=2}\atop{k\rm\ even}}^j {{2j-k-1}\choose{j-1}}(s_{0k}+s_{k0})
        + \sum_{{k=1}\atop{k\rm\ odd}}^j {{2j-k-1}\choose{j-1}}(s_{k0}-s_{0k}).
   $$
   Plugging in $s_{0k}+s_{k0}=1+2(1-1/2^{k-1})\zeta(k)$ for even $k$,
   $s_{10}-s_{01}=-1$, and $s_{k0}-s_{0k}=1-4(1-1/2^{k-2}))\zeta(k-1)$ for
   odd $k\ge 3$, it follows that
   \begin{eqnarray*}
      (-1)^js_{jj}
      & = & \sum_{{k=2}\atop{k\rm\ even}}^j {{2j-k-1}\choose{j-1}}
            \bigg(1+2\bigg(1-\frac{1}{2^{k-1}}\bigg)\zeta(k)\bigg)\\
      &   & \hspace{1cm}
            + {{2j-2}\choose{j-1}}(-1)
            + \sum_{{k=3}\atop{k\rm\ odd}}^j {{2j-k-1}\choose{j-1}}
              \bigg(1-4\bigg(1-\frac{1}{2^{k-2}}\bigg)\zeta(k-1)\bigg)\\
      & = & \sum_{k=2}^j {{2j-k-1}\choose{j-1}} - {{2j-2}\choose{j-1}}
            + 2\sum_{{k=2}\atop{k\rm\ even}}^j {{2j-k-1}\choose{j-1}}\bigg(1-\frac{1}{2^{k-1}}\bigg)\zeta(k)\\
      &   & \hspace{1cm} - 4\sum_{{k=3}\atop{k\rm\ odd}}^j {{2j-k-1}\choose{j-1}}\bigg(1-\frac{1}{2^{k-2}}\bigg)\zeta(k-1)\\
      & = & {{2j-2}\choose j} - {{2j-2}\choose{j-1}}
            + 2\sum_{{k=2}\atop{k\rm\ even}}^j {{2j-k-1}\choose{j-1}}
            \bigg(1-\frac{1}{2^{k-1}}\bigg)\zeta(k)\\
      &   & \hspace{1cm} - 4\sum_{{k=2}\atop{k\rm\ even}}^{j-1} {{2j-k-2}\choose{j-1}}\bigg(1-\frac{1}{2^{k-1}}\bigg)\zeta(k).
   \end{eqnarray*}
   Thus,
   \begin{equation} \label{sjj}
      (-1)^js_{jj}\ =\ c_j + \sum_{{k=2}\atop{k\rm\ even}}^j c_{jk}\zeta(k),
   \end{equation}
   where $c_j:={{2j-2}\choose j}-{{2j-2}\choose{j-1}}=-(2j-2)!/(j!(j-1)!)$ and
   \begin{eqnarray*}
      c_{jk}
      & := & 2\bigg(1-\frac{1}{2^{k-1}}\bigg)
             \bigg({{2j-k-1}\choose{j-1}}-2{{2j-k-2}\choose{j-1}}\bigg)\\
      & = & 2\bigg(1-\frac{1}{2^{k-1}}\bigg)
            \bigg(\frac{(2j-k-1)!}{(j-1)!(j-k)!}-2\frac{(2j-k-2)!}{(j-1)!(j-k-1)!}\bigg)\\
      & = & 2\bigg(1-\frac{1}{2^{k-1}}\bigg)
            \frac{(2j-k-2)!}{(j-1)!(j-k)!}\big((2j-k-1)-2(j-k)\big)\\
      & = & 2(k-1)\bigg(1-\frac{1}{2^{k-1}}\bigg)\frac{(2j-k-2)!}{(j-1)!(j-k)!}.
   \end{eqnarray*}
   Substituting $k=2m$ in (\ref{sjj}) and
   noting that $\zeta(0)=-1/2$ yields (\ref{sjjsolution}). The proof
   is complete.\hfill$\Box$
\end{proof}
In order to prepare the proof of Theorem \ref{thm3} recall that a standard
Gumbel distributed random variable $G$ has
(see, for example, \cite[p.~12, Eqs.~(22.29) and (22.30)]{johnsonkotz})
cumulants $\kappa_1(G)=\gamma$ (Euler's constant) and
$\kappa_j(G)=(-1)^j\Psi^{(j-1)}(1)=(j-1)!\,\zeta(j)$, $j\ge 2$.
\begin{proof}
   (of Theorem \ref{thm3})
   We start similar as in the proof of Proposition \ref{prop1}.
   Clearly, $L_n\stackrel{d}{=}\sum_{k=2}^n k\tau_k=\sum_{k=2}^n X_k$, where
   the random variables $X_k:=k\tau_k$, $k\ge 2$, are independent and $X_k$ is
   exponentially distributed with parameter $\mu_k:=\lambda_k/k=(k-1)/2$, $k\ge 2$.
   Thus, $L_n$ has cumulants
   \begin{equation} \label{lncum}
   \kappa_j(L_n)
   \ =\ \sum_{k=2}^n \kappa_j(X_k)
   \ =\ \sum_{k=2}^n \frac{(j-1)!}{\mu_k^j}
   \ =\ (j-1)!2^j\sum_{k=2}^n \frac{1}{(k-1)^j},\qquad j\in\nz.
   \end{equation}
   For $j=1$ we have $\kappa_1(G_n)=\kappa_1(L_n/2-\log n)=
   \kappa_1(L_n)/2-\log n=\sum_{k=2}^n 1/(k-1)-\log n\to\gamma
   =\kappa_1(G)$ as $n\to\infty$. For $j\ge 2$ we have
   $\kappa_j(G_n)=\kappa_j(L_n)/2^j=(j-1)!\sum_{k=2}^n 1/(k-1)^j
   \to (j-1)!\zeta(j)=\kappa_j(G)$ as $n\to\infty$.
   Thus, we have convergence $\kappa_j(G_n)\to\kappa_j(G)$ as $n\to\infty$
   of all cumulants, which implies the
   convergence $\me(G_n^j)\to\me(G^j)$ as $n\to\infty$ of all moments.

   For the convergence $G_n:=L_n/2-\log n\to G$ in distribution as $n\to\infty$ we refer
   the reader to \cite[Lemma 7.1]{drmotaiksanovmoehleroesler} and the references
   in the remark thereafter.
   In order to verify that the convergence $G_n\to G$ holds even almost
   surely define
   \begin{equation} \label{sn}
      S_n\ :=\ \frac{L_n-\me(L_n)}{2}
      \ =\ \sum_{k=2}^n \frac{X_k-\me(X_k)}{2}
      \ =\ \sum_{k=2}^n Y_k,\qquad n\in\nz,
   \end{equation}
   where $Y_k:=(X_k-\me(X_k))/2$, $k\ge 2$. Note that $Y_2,Y_3,\ldots$
   are independent. From $\me(L_n)=2\sum_{k=2}^n 1/(k-1)=
   2\log n +2\gamma+O(1/n)$ we conclude that $S_n\to G-\gamma$ in
   distribution as $n\to\infty$. It is well known (see, for example
   \cite[Theorem 22.7]{billingsley}) that a sum
   $S_n=\sum_{k=2}^n Y_k$ of independent random variables converges
   in distribution if and only if it converges almost surely. Thus
   we even have $S_n\to G-\gamma$ almost surely. Thus,
   $G_n\to G$ almost surely as $n\to\infty$. By \cite[Proposition 3.12]{kallenberg}
   it follows that $G_n\to G$ in $L^p$ for any $p\in (0,\infty)$,
   and the sequence $(G_n^p)_{n\in\nz}$ is uniformly integrable for any
   $p\in (0,\infty)$. Note that $\|S_n-(G-\gamma)\|_p
   \le\|\gamma+S_n-G_n\|_p+\|G_n-G\|_p=\|\gamma+\log n-\sum_{k=2}^n 1/(k-1)\|_p+
   \|G_n-G\|_p\to 0$ as $n\to\infty$, so we also have $S_n\to G-\gamma$
   in $L^p$ for any $p\in (0,\infty)$ and, hence, convergence
   $\me(S_n^j)\to \me((G-\gamma)^j)$ as $n\to\infty$ of all moments. Furthermore,
   $(S_n)_{n\in\nz}$ is a martingale, but we did
   not use this property in the proof.
   \hfill$\Box$
\end{proof}
\begin{remark}
   In this remark formulas for the moments of the total
   tree length $L_n$ are provided. It is known (see, for example,
\cite[Lemma 7.1]{drmotaiksanovmoehleroesler} and the remark thereafter)
that $L_n$ has the same distribution as the maximum of $n-1$ independent
and exponentially distributed random variables
with parameter $1/2$.
In particular, $L_n$ has distribution function $\pr(L_n\le t)
=(1-e^{-t/2})^{n-1}$, $t>0$, and, hence, moments
\begin{eqnarray}
   \me(L_n^j)
   & = & \int_0^\infty jt^{j-1}\pr(L_n>t)\,{\rm d}t
   \ = \ \int_0^\infty jt^{j-1}\big(1-(1-e^{-t/2})^{n-1}\big)\,{\rm d}t\nonumber\\
   & = & \int_0^\infty -jt^{j-1}\sum_{k=1}^{n-1}{{n-1}\choose k}(-e^{-t/2})^k\,{\rm d}t
   \ = \ \sum_{k=1}^{n-1} (-1)^{k+1}{{n-1}\choose k}\int_0^\infty jt^{j-1}e^{-kt/2}\,{\rm d}t\nonumber\\
   & = & \sum_{k=1}^{n-1} (-1)^{k+1}{{n-1}\choose k}\frac{j!}{(k/2)^j}
   \ = \ j!2^j\sum_{k=1}^{n-1} \frac{(-1)^{k+1}}{k^j}{{n-1}\choose k},
   \qquad j\in\nz. \label{lnmom}
\end{eqnarray}
   Alternatively,
   \begin{eqnarray*}
      \me(L_n^j)
      & = & \me\bigg(\bigg(\sum_{k=2}^n X_k\bigg)^j\bigg)
      \ = \ \sum_{k_1,\ldots,k_j=2}^n \me(X_{k_1}\cdots X_{k_j})\\
      & = & \sum_{k_1,\ldots,k_j=2}^n \me(X_2^{a_2})\cdots\me(X_n^{a_n})
      \ = \ \sum_{2\le k_1\le\cdots\le k_j\le n}\frac{j!}{a_2!\cdots a_n!}\,
            \me(X_2^{a_2})\cdots\me(X_n^{a_n}) ,
   \end{eqnarray*}
   where, for $m\in\{2,\ldots,n\}$, $a_m$ denotes the number of indices
   $k_1,\ldots,k_j$ being equal to $m$. Since $\me(X_m^{a_m})=a_m!/\mu_k^{a_m}$,
   the above expression simplifies to
   \begin{equation} \label{lnmom2}
      \me(L_n^j)
      \ =\ j!\sum_{2\le k_1\le\cdots\le k_j\le n}\frac{1}{\mu_{k_1}\cdots\mu_{k_j}}
      \ =\ j!2^j
           \sum_{1\le k_1\le\cdots\le k_j\le n-1} \frac{1}{k_1\cdots k_j},
      \quad j\in\nz.
   \end{equation}
   Comparing (\ref{lnmom}) with (\ref{lnmom2}) leads to the combinatorial
   identity
   $$
   \sum_{k=1}^{n-1} \frac{(-1)^{k+1}}{k^j}{{n-1}\choose k}
   \ =\ \sum_{1\le k_1\le\cdots\le k_j\le n-1}\frac{1}{k_1\cdots k_j},
   \qquad j\in\nz, n\ge 2.
   $$
\end{remark}
\begin{proof} (of Theorem \ref{cmgumbel})
   Note first that an exponentially distributed random variable $X$ with
   parameter $\alpha\in (0,\infty)$ has moments $\me(X^n)=n!/\alpha^n$
   and central moments $\me((X-\me(X))^n))=d_n/\alpha^n$
   with $d_n$ defined in (\ref{dn}), $n\in\nz_0$. Consider the random variable
   $S_N$ defined via (\ref{sn}).
   From the proof of Theorem \ref{thm3} it is already known
   that $S_N\to G-\gamma$ almost surely as $N\to\infty$
   with convergence of all moments. Moreover, for all $n\in\nz_0$,
   \begin{eqnarray*}
      \me(S_N^n)
      & = & \me((Y_2+\cdots+Y_N)^n)\\
      & = &  \sum_{i=1}^n
             \sum_{2\le k_1<\cdots<k_i\le N}
             \sum_{{n_1,\ldots,n_i\ge 1}\atop{n_1+\cdots+n_i=n}}
             \frac{n!}{n_1!\cdots n_i!}\me(Y_{k_1}^{n_1})\cdots\me(Y_{k_i}^{n_i})\\
      & = & \sum_{i=1}^n \frac{1}{i!}
            \sum_{{k_1,\ldots,k_i=2}\atop{\rm all\ distinct}}^N
            \sum_{{n_1,\ldots,n_i\ge 1}\atop{n_1+\cdots+n_i=n}}
            \frac{n!}{n_1!\cdots n_i!}
            \me(Y_{k_1}^{n_1})\cdots\me(Y_{k_i}^{n_i})\\
      & = & n!\sum_{i=1}^n \frac{1}{i!}
            \sum_{{n_1,\ldots,n_i\ge 1}\atop{n_1+\cdots+n_i=n}}
            \frac{1}{n_1!\cdots n_i!}
            \sum_{{k_1,\ldots,k_i=2}\atop{\rm all\ distinct}}^N
            \frac{d_{n_1}}{(k_1-1)^{n_1}}\cdots\frac{d_{n_i}}{(k_i-1)^{n_i}}\\
      & = & n!\sum_{i=1}^n \frac{1}{i!}
            \sum_{{n_1,\ldots,n_i\ge 2}\atop{n_1+\cdots+n_i=n}}
            \frac{d_{n_1}\cdots d_{n_i}}{n_1!\cdots n_i!}
            \sum_{{k_1,\ldots,k_i=1}\atop{\rm all\ distinct}}^{N-1}
            \frac{1}{k_1^{n_1}\cdots k_i^{n_i}},
   \end{eqnarray*}
   since $d_1=0$. Letting $N\to\infty$ it follows that $G$ has central moments
   (\ref{central}) with $s_i(n_1,\ldots,n_i)$ defined via (\ref{s1}).
   Obviously, the expressions in (\ref{s2}) and (\ref{s3}) coincide.
   Thus, it remains to verify that $s_i(n_1,\ldots,n_i)$ can be expressed in
   terms of the zeta function via (\ref{s2}). We show this by induction on $i\in\nz$.
   Clearly, (\ref{s2}) holds for $i=1$, since $s_1(n_1)=\zeta(n_1)$ for all
   $n_1\ge 2$.
   Concerning the induction step from $1,\ldots,i$ to $i+1$ ($\ge 2$) note
   first that, for all $i\in\nz$ and all $n_1,\ldots,n_i\ge 2$,
   \begin{eqnarray}
      &   & \hspace{-25mm}
            s_{i+1}(n_1,\ldots,n_{i+1})
      \ = \ \sum_{{k_1,\ldots,k_i\in\nz}\atop{\rm all\ distinct}}
            \frac{1}{k_1^{n_1}\cdots k_i^{n_i}}
            \sum_{k_{i+1}\in\nz\setminus\{k_1,\ldots,k_i\}}
            \frac{1}{k_{i+1}^{n_{i+1}}}\nonumber\\
      & = & \sum_{{k_1,\ldots,k_i\in\nz}\atop{\rm all\ distinct}}
            \frac{1}{k_1^{n_1}\cdots k_i^{n_i}}
            \bigg(
               \sum_{k_{i+1}\in\nz}\frac{1}{k_{i+1}^{n_{i+1}}}
               - \sum_{r=1}^i \frac{1}{k_r^{n_{i+1}}}
            \bigg)\nonumber\\
      & = & s_i(n_1,\ldots,n_i)\,s_1(n_{i+1}) - \sum_{r=1}^i s_i(n_1,\ldots,n_{r-1},n_r+n_{i+1},n_{r+1},\ldots,n_i).
      \label{srec}
   \end{eqnarray}
   By induction we conclude that
   \begin{eqnarray}
      &   & \hspace{-15mm}
            s_i(n_1,\ldots,n_i)\,s_1(n_{i+1})\nonumber\\
      & = & \sum_{l=1}^i (-1)^{i-l}
            \sum_{\{B_1,\ldots,B_l\}\in{\cal P}_i}
            (|B_1|-1)!\cdots(|B_l|-1)!\,
            \zeta(n_{B_1})\cdots\zeta(n_{B_l})\zeta(n_{i+1})\nonumber\\
      & = & \sum_{l=1}^i (-1)^{i-l}
            \sum_{{\{B_1,\ldots,B_{l+1}\}\in{\cal P}_{i+1}}\atop{B_{l+1}=\{i+1\}}}
            (|B_1|-1)!\cdots (|B_{l+1}|-1)!\,
            \zeta(n_{B_1})\cdots\zeta(n_{B_{l+1}})\nonumber\\
      & = & \sum_{l=2}^{i+1} (-1)^{i+1-l}
            \sum_{{\{B_1,\ldots,B_l\}\in{\cal P}_{i+1}}\atop{B_l=\{i+1\}}}
            (|B_1|-1)!\cdots (|B_l|-1)!\,
            \zeta(n_{B_1})\cdots\zeta(n_{B_l}).\label{induction1}
   \end{eqnarray}
   Also by induction it is seen that
   \begin{eqnarray*}
      &   & \hspace{-15mm}\sum_{r=1}^i s_i(n_1,\ldots,n_{r-1},n_r+n_{i+1},n_{r+1},\ldots,n_i)\\
      & = & \sum_{r=1}^i \sum_{l=1}^i (-1)^{i-l}
            \sum_{{\{A_1,\ldots,A_l\}\in{\cal P}_i}\atop{r\in A_l}}
            (|A_1|-1)!\cdots (|A_l|-1)!\,
            \zeta(n_{A_1})\cdots\zeta(n_{A_{l-1}})\zeta(n_{A_l\cup\{i+1\}}),
   \end{eqnarray*}
   where, in the last sum, we ordered without loss of generality the blocks $A_1,\ldots,A_l$ of the
   partition such that the element $r$ belongs to the last block $A_l$.
   Reordering the sums on the right hand side yields
   \begin{eqnarray*}
      &   & \hspace{-10mm}
            \sum_{r=1}^i s_i(n_1,\ldots,n_{r-1},n_r+n_{i+1},n_{r+1},\ldots,n_i)\\
      & = & \sum_{l=1}^i (-1)^{i-l}
            \sum_{\{A_1,\ldots,A_l\}\in{\cal P}_i}
            \sum_{{r=1}\atop{r\in A_l}}^i
            (|A_1|-1)!\cdots (|A_l|-1)!\,
            \zeta(n_{A_1})\cdots\zeta(n_{A_{l-1}})\zeta(n_{A_l\cup\{i+1\}}).
   \end{eqnarray*}
   Rewriting this expression in terms of the blocks
   $B_1:=A_1,\ldots,B_{l-1}:=A_{l-1}$ and $B_l:=A_l\cup\{i+1\}$ it follows that
   \begin{eqnarray*}
      &   & \hspace{-10mm}\sum_{r=1}^i
            s_i(n_1,\ldots,n_{r-1},n_r+n_{i+1},n_{r+1},\ldots,n_i)\\
      & = & \sum_{l=1}^i (-1)^{i-l}
            \sum_{\{B_1,\ldots,B_l\}\in{\cal P}_{i+1}}
            \sum_{{r=1}\atop{r,i+1\in B_l}}^i
            (|B_1|-1)!\cdots (|B_{l-1}|-1)!\,(|B_l|-2)!\,
            \zeta(n_{B_1})\cdots\zeta(n_{B_l}).
   \end{eqnarray*}
The last sum (over $r$) consists of $|B_l|-1$ summands and these
summands do not depend on $r$, which gives rise to a factor
$|B_l|-1$ leading to
   \begin{eqnarray}
      &   & \hspace{-15mm}\sum_{r=1}^i s_i(n_1,\ldots,n_{r-1},n_r+n_{i+1},n_{r+1},\ldots,n_i)\nonumber\\
      & = & \sum_{l=1}^i {(-1)^{i-l}}
            \sum_{{\{B_1,\ldots,B_l\}\in{\cal P}_{i+1}}\atop{B_k\neq\{i+1\}{\rm\;for\;all\;}k\in\{1,\ldots,l\}}}
            (|B_1|-1)!\cdots (|B_l|-1)!\,
            \zeta(n_{B_1})\cdots\zeta(n_{B_l}).
            \label{induction2}
   \end{eqnarray}
   Subtracting (\ref{induction2}) from (\ref{induction1}) and recalling
   (\ref{srec}) it follows that
   $$
   s_{i+1}(n_1,\ldots,n_{i+1})
   \ =\ \sum_{l=1}^{i+1}(-1)^{i+1-l}\sum_{\{B_1,\ldots,B_l\}\in{\cal P}_{i+1}}
            (|B_1|-1)!\cdots(|B_l|-1)!\,\zeta(n_{B_1})\cdots\zeta(n_{B_l}),
   $$
   which completes the induction. Thus, (\ref{s2}) is established.
   \hfill$\Box$
\end{proof}
\subsection{Appendix}
\subsubsection*{Central moments of the Gumbel distribution}
\setcounter{theorem}{0}
   For completeness we record the central moment $m_n'$ of the
   Gumbel distribution for $0\le n\le 10$. Based on (\ref{central})
   we provide them in terms of the coefficients $s_i(n_1,\ldots,n_i)$,
   in terms of the zeta function, and numerically.
   The first central moments of the Gumbel distribution are
   $m_0'=1$, $m_1'=0$,
   $m_2'=s_1(2)=\zeta(2)=\pi^2/6\approx 1.64493$,
   $m_3'=2s_1(3)=2\zeta(3)\approx 2.40411$,
   $m_4'=9s_1(4)+3s_2(2,2)=6\zeta(4)+3\zeta^2(2)=3\pi^4/20\approx 14.61136$,
   $m_5'=44s_1(5)+20s_2(2,3)=24\zeta(5)+
   20\zeta(2)\zeta(3)=24\zeta(5)+(10/3)\pi^2\zeta(3)
   \approx 64.43235$,
   $$
   \begin{array}{lcl}
   m_6'
   & = & 265s_1(6)+135s_2(2,4)+40s_2(3,3)+15s_3(2,2,2)\vspace{2mm}\\
   & = & 120\zeta(6)+90\zeta(2)\zeta(4)+40\zeta^2(3)+15\zeta^3(2)\vspace{2mm}\\
   & = & \displaystyle\frac{61}{168}\pi^6+40\zeta^2(3)
   \ \approx\ 406.87347,\vspace{3mm}\\
   \end{array}
   $$
   $$
   \begin{array}{lcl}
   m_7'
   & = & 1854s_1(7)+924s_2(2,5)+630s_2(3,4)+210s_3(2,2,3)\vspace{2mm}\\
   & = & 720\zeta(7)+504\zeta(2)\zeta(5)+420\zeta(3)\zeta(4)+210\zeta^2(2)\zeta(3)\vspace{2mm}\\
   & = & \displaystyle 720\zeta(7)+84\pi^2\zeta(5)+\frac{21}{2}\pi^4\zeta(3)
   \ \approx\ 2\,815.13142,\vspace{3mm}\\
   \end{array}
   $$
   $$
   \begin{array}{lcl}
   m_8'
   & = & 14833s_1(8)+7420s_2(2,6)+4928s_2(3,5)+2835s_2(4,4)\vspace{2mm}\\
   &   & \hspace{3cm}+\ 1890s_3(2,2,4)+1120s_3(2,3,3)+105s_4(2,2,2,2)\vspace{2mm}\\
   & = & 5040\zeta(8)+3360\zeta(2)\zeta(6)+2688\zeta(3)\zeta(5)+1260\zeta^2(4)\vspace{2mm}\\
   &   & \hspace{3cm}+\ 1260\zeta^2(2)\zeta(4)+1120\zeta(2)\zeta^2(3)+105\zeta^4(2)\vspace{2mm}\\
   & = & \displaystyle\frac{1261}{720}\pi^8 + 2688\zeta(3)\zeta(5)+\frac{560}{3}
   \pi^2\zeta^2(3)
   \ \approx\ 22630.60731,\vspace{3mm}\\
   \end{array}
   $$
   $$
   \begin{array}{lcl}
   m_9'
   & = & 133496s_1(9)+66744s_2(2,7)+44520s_2(3,6)+49896s_2(4,5)\vspace{2mm}\\
   &   & \hspace{1cm}+\ 16632s_3(2,2,5)+22680s_3(2,3,4)+2240s_3(3,3,3)+2520s_4(2,2,2,3)\vspace{2mm}\\
   & = & 40320\zeta(9)+25920\zeta(2)\zeta(7)+20160\zeta(3)\zeta(6)+18144\zeta(4)\zeta(5)\vspace{2mm}\\
   &   & \hspace{1cm}+\ 9072\zeta^2(2)\zeta(5)+15120\zeta(2)\zeta(3)\zeta(4)+2240\zeta^3(3)+2520\zeta^3(2)\zeta(3)\vspace{2mm}\\
   & = & \displaystyle 40320\zeta(9)+4320\pi^2\zeta(7)+\frac{2268}{5}\pi^4\zeta(5)
         +2240\zeta^3(3)+61\pi^6\zeta(3)\vspace{2mm}\\
   & \approx & 203595.03670,\quad\mbox{and}\vspace{3mm}\\
   \end{array}
   $$
   $$
   \begin{array}{lcl}
   m_{10}'
   & = & 1334961s_1(10)+667485s_2(2,8)+444960s_2(3,7)+500850s_2(4,6)\vspace{2mm}\\
   &   & \hspace{8mm}+\ 243936s_2(5,5)+166950s_3(2,2,6)+221760s_3(2,3,5)+127575s_3(2,4,4)\vspace{2mm}\\
   &   & \hspace{8mm}+\ 75600s_3(3,3,4)+28350s_4(2,2,2,4)+25200s_4(2,2,3,3)+945s_5(2,2,2,2,2)\vspace{2mm}\\
   & = & 362880\zeta(10)+226800\zeta(2)\zeta(8)+172800\zeta(3)\zeta(7)+151200\zeta(4)\zeta(6)\vspace{2mm}\\
   &   & \hspace{8mm}+\ 72576\zeta^2(5)+75600\zeta^2(2)\zeta(6)+120960\zeta(2)\zeta(3)\zeta(5)+56700\zeta(2)\zeta^2(4)\vspace{2mm}\\
   &   & \hspace{8mm}+\ 50400\zeta^2(3)\zeta(4)
         +18900\zeta^3(2)\zeta(4)+25200\zeta^2(2)\zeta^2(3)+945\zeta^5(2)\vspace{2mm}\\
   & = & \displaystyle \frac{4977}{352}\pi^{10}+172800\zeta(3)\zeta(7)
         +72576\zeta^2(5)+20160\pi^2\zeta(3)\zeta(5)+1260\pi^4\zeta^2(3)\vspace{2mm}\\
   & \approx & 2036946.09776.
   \end{array}
   $$
\subsubsection*{A spectral decomposition}
\setcounter{theorem}{0}
\begin{lemma}[Spectral decomposition of a pure death process] \label{decomposition}
   Let $n\in\nz$ and let $X:=(X_t)_{t\ge 0}$ be a pure death process on a
   probability space $(\Omega,{\cal F},\pr)$ with state space
   $\{1,\ldots,n\}$ and pairwise distinct
   death rates $d_1,\ldots,d_n$. Then, the transition probabilities
   $p_{ij}(t):=\pr(X_t=j\,|\,X_0=i)$ are given by
   \begin{equation}
      p_{ij}(t)\ =\ \sum_{k=j}^i e^{-d_kt} r_{ik}l_{kj},
      \qquad i,j\in\{1,\ldots,n\},
   \end{equation}
   where the $n\times n$ matrices $R=(r_{ij})$ and $L=(l_{ij})$ are
   defined via $r_{ij}:=l_{ij}:=0$ for $i<j$ and
   $$
   r_{ij}\ :=\ \prod_{l=j+1}^i \frac{d_l}{d_l-d_j}
   \qquad\mbox{and} \qquad
   l_{ij}\ :=\ \prod_{l=j}^{i-1} \frac{d_{l+1}}{d_l-d_i}
   \qquad\mbox{for $i\ge j$.}
   $$
\end{lemma}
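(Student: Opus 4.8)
The plan is to recognise the matrix $\big(p_{ij}(t)\big)_{i,j}$ as the matrix exponential $e^{tQ}$, where $Q$ is the generator of the death process, and then to diagonalise $Q$. Indexing the states by $\{1,\ldots,n\}$, the only nonzero entries of $Q$ are $Q_{ii}=-d_i$ and $Q_{i,i-1}=d_i$ (the jump from $i$ to $i-1$ occurs at rate $d_i$), so $Q$ is lower triangular with diagonal $-d_1,\ldots,-d_n$; these are its eigenvalues and are pairwise distinct by hypothesis. Hence $Q$ is diagonalisable, $Q=R\Lambda L$ with $\Lambda=\mathrm{diag}(-d_1,\ldots,-d_n)$, the columns of $R$ right eigenvectors and $L=R^{-1}$, and exponentiating gives $p_{ij}(t)=(e^{tQ})_{ij}=(Re^{t\Lambda}L)_{ij}=\sum_k e^{-d_kt}r_{ik}l_{kj}$. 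It therefore suffices to show that the matrices $R$ and $L$ of the statement furnish such a diagonalisation, namely: (i) the columns of $R$ are right eigenvectors, (ii) the rows of $L$ are left eigenvectors, and (iii) $L=R^{-1}$.

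First I would verify the eigenvector relations directly from the bidiagonal structure. For the $k$th column of $R$ the relation $QR=R\Lambda$ reads, in row $i$, $-d_ir_{ik}+d_ir_{i-1,k}=-d_kr_{ik}$, i.e.\ the two-term recursion $r_{ik}=\frac{d_i}{d_i-d_k}r_{i-1,k}$ for $i\ne k$; at $i=k$ this forces $r_{k-1,k}=0$ and hence $r_{ik}=0$ for $i<k$, while $r_{kk}=1$ is the free normalisation. Solving upward from $i=k$ yields exactly $r_{ik}=\prod_{l=k+1}^i d_l/(d_l-d_k)$. Symmetrically, $LQ=\Lambda L$ reduces for the $k$th row of $L$ to $l_{kj}=\frac{d_{j+1}}{d_j-d_k}l_{k,j+1}$ for $j\ne k$, with $l_{kk}=1$ and $l_{kj}=0$ for $j>k$, whose solution downward from $j=k$ is $l_{kj}=\prod_{l=j}^{k-1}d_{l+1}/(d_l-d_k)$, matching the statement.

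It then remains to establish (iii), equivalently $(LR)_{kl}=\delta_{kl}$, and here the distinctness of the $d_k$ does the work for free. If $w_k$ is the $k$th row of $L$ and $v_l$ the $l$th column of $R$, then $w_kQv_l$ equals both $-d_l(w_k\cdot v_l)$ and $-d_k(w_k\cdot v_l)$, so $(d_k-d_l)(w_k\cdot v_l)=0$ and thus $w_k\cdot v_l=0$ whenever $k\ne l$. The diagonal normalisation is immediate from the triangular support of the two matrices: since $r_{ik}=0$ for $i<k$ and $l_{ki}=0$ for $i>k$, the only surviving term in $w_k\cdot v_k=\sum_i l_{ki}r_{ik}$ is $i=k$, giving $l_{kk}r_{kk}=1$. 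Hence $LR=I$, so $L=R^{-1}$, and the decomposition above yields the claimed formula; the summation range $k=j,\ldots,i$ appears precisely because $r_{ik}=0$ unless $k\le i$ and $l_{kj}=0$ unless $k\ge j$.

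I expect the only genuine obstacle to be bookkeeping rather than ideas: pinning down the generator at the boundary state $1$ (where the process is absorbed, so that the pertinent eigenvalue is $-d_1$, with $d_1=0$ in the application of Proposition \ref{prop1}, or mass leaks to a cemetery), and handling the empty-product conventions and index ranges so that the base cases $r_{kk}=l_{kk}=1$ and the forced zeros $r_{k-1,k}=l_{k,k+1}=0$ come out correctly. A completely equivalent route, avoiding eigenvector language altogether, is to take the right-hand side of the asserted identity as an ansatz and verify directly that it solves Kolmogorov's backward equations $P'(t)=QP(t)$ with $P(0)=I$, invoking uniqueness of the solution of this linear system; the computation is the same, with the initial condition $P(0)=I$ playing exactly the role of the biorthogonality relation $LR=I$.
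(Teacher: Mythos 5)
Your proposal is correct, and its skeleton coincides with the paper's proof: identify $P(t)=e^{tQ}$ for the bidiagonal generator $Q$, diagonalise $Q=R\Lambda L$ with $\Lambda=\mathrm{diag}(-d_1,\ldots,-d_n)$ (the paper writes $D$ for your $\Lambda$), and exponentiate to get $P(t)=Re^{t\Lambda}L$. The genuine difference is how the inverse relation between $R$ and $L$ is established, which is the only nontrivial step. The paper checks the single eigenvector relation $RD=QR$ and then asserts that ``with some effort it can be checked that $RL=I$'', burying a product--sum identity in an unexplained direct computation. You instead verify \emph{two} eigenvector relations, $QR=R\Lambda$ and $LQ=\Lambda L$ (each a routine two-term recursion since $Q$ is bidiagonal), and then obtain $LR=I$ essentially for free: off-diagonal entries vanish by biorthogonality, since $w_kQv_l$ computed two ways gives $(d_k-d_l)(w_k\cdot v_l)=0$ and the $d_i$ are pairwise distinct, while the diagonal entries reduce to $l_{kk}r_{kk}=1$ by the triangular supports; and $LR=I$ is equivalent to the paper's $RL=I$ for square matrices. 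This trades the paper's hidden computation for a second easy check plus soft linear algebra, and it makes explicit where distinctness of the rates enters. One minor caveat: your claim that the relation at $i=k$ ``forces'' $r_{k-1,k}=0$ requires $d_k\neq 0$, which can fail for one state (in the paper's application $d_1=\lambda_1=0$); this is harmless, since your argument only needs the verification direction --- that the stated $R$ and $L$ satisfy the eigenvector equations --- not that they are uniquely determined by them.
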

\begin{proof}
   With some effort it can be checked that $RL=I$, where
   $I=(\delta_{ij})_{1\le i,j\le n}$ denotes the $n\times n$ unit matrix.
   Let $Q=(q_{ij})_{1\le i,j\le n}$ denote the generator matrix of $X$,
   i.e. $q_{ii}:=-d_i$, $q_{i,i-1}:=d_i$ and $q_{ij}=0$ otherwise.
   Furthermore, let $D$ denote the diagonal matrix with entries
   $d_{ij}:=-d_i$ for $i=j$ and $d_{ij}:=0$ otherwise.
   It is readily checked that $RD=QR$, and, hence,
   $RDL=Q$. The transition probabilities $p_{ij}(t)$ of the process
   $X$ are now obtained from the spectral decomposition
   $P(t):=e^{tQ}=e^{tRDL}=R(e^{tD})L$ of the transition matrix
   $P(t)$ as $p_{ij}(t)=\sum_{k=1}^n e^{-d_kt} r_{ik} l_{kj}
   =\sum_{k=j}^i e^{-d_kt}r_{ik}l_{kj}$.\hfill$\Box$
\end{proof}

\end{document}